\newtheorem{theorem}{Theorem}[section]
\newtheorem{lemma}[theorem]{Lemma}
\newtheorem{proposition}[theorem]{Proposition}
\theoremstyle{definition}
\newtheorem{definition}[theorem]{Definition}
\theoremstyle{definition}
\newtheorem{example}[theorem]{Example}
\newcommand{\ad}{\mbox{ad}} 
\newcommand{\im}{\mbox{im}}
\newcommand{\vspan}{\mbox{span}}
\newcommand{\ii}{{\text{i}}}
\newcommand{\R}{\mathbb{R}}
\title{Unfolding of nilpotent equilibria of degree 4 in Hamiltonian systems with 2 degrees of freedom}
\author{Giannis Moutsinas
\\[6pt]
Mathematics Institute, University of Warwick\\[6pt]
{\small
E-mail: Giannis.Moutsinas@gmail.com}}
\begin{document}

\maketitle

\begin{abstract}
 We consider Hamiltonian systems of two degrees of freedome having a nilpotent equilibrium point with only one eigenvector.
 We provide the universal unfolding of such equilibrium, provided a non-degeneracy condition holds.
 We show that the only co-dimension 1 bifurcations that happen in the unfolding are of two types:
 the normally hyperbolic or elliptic centre-saddle bifurcations and the supercritical Hamiltonian-Hopf bifurcation.
\end{abstract}

\tableofcontents

\section{Introduction}
\label{ch:intro}

One of the few possible methods to study the dynamics of Hamiltonian systems is to focus on neighbourhoods of their equilibria.
In the case where the eigenvalues of the linearized system have non-vanishing real parts, the motion is completely determined by the linearization.
In the case of imaginary eigenvalues,  the presence of resonances affects dramatically the dynamics.

In the present paper we will focus on equilibria with vanishing eigenvalues.
The case with degree 2 nilpotent matrix arises when modeling optics in an $\chi^2$-medium and was studied in \cite{Wagen02}.
A degree 3 nilpotent matrix cannot appear in two degrees of freedom and a degree 1 nilpotent matrix is the zero matrix, which has co-dimension 10, see \cite{galin82}.

We will consider a system of two degrees of freedom with an equilibrium, such that the linearized system has a degree 4 nilpotent matrix.
A fist step towards the study of the unfolding was performed in \cite{hanssmann06}. In the present paper we complete the study.

\paragraph{Structure of the paper} \hspace{0em}

Section \ref{ch:intro} reiterates known results that are used in the study. In section \ref{ch:results} the non-linear unfolding of the system is studied. We will see that even though the linear unfolding is of co-dimension 2, the non-linear unfolding requires 3 parameters. In section \ref{ch:truncated-hamiltonian} we will see that under a non-degeneracy condition, the system can be simplified by truncating certain 3rd order terms. Finally in section \ref{ch:param_redux} we will see how the parameters of the truncated system can be reduced from 3 to 2.

\subsection{Hamiltonian systems on $\R^4$}

Here we review briefly some basic facts from the theory of Hamiltonian systems.
Due to the scope of the paper we concentrate on $\R^4$.
For a thorough treatment of the theory see \cite{arnold90}.

We can view $\R^4$ as a symplectic manifold by defining the symplectic form $$ \omega = d q_1 \wedge d p_1 + d q_2 \wedge d p_2. $$
Here we use the canonical coordinates, i.e. a point on $\R^4$ is represented as $(q_1,q_2,p_1,p_2)$.
The symplectic form can equivalently be defined as
$$ \omega(u,v) = u^\intercal \Omega v, $$
where $\Omega$ is the $2m \times 2m$ matrix $\left( \begin{smallmatrix} 0&E_m\\ -E_m&0 \end{smallmatrix} \right)$, with $E_m$ the $m\times m$ identity matrix.
Then the Hamiltonian vector field can be written as
$$ X_H = \Omega\, \nabla H, $$
with $\nabla H$ the divergence vector of $H$.

A change of coordinates is called symplectic if it preserves the symplectic form.
In particular, on $(\R^4,\omega)$ a linear transformation is symplectic if and only if its matrix $P$ satisfies
$$ P^\intercal\,\Omega\, P = \Omega. $$

\subsection{Linear normal form}

A linear Hamiltonian system can be defined by a Hamiltonian function which is a polynomial of degree 2.
Naturally we can assume that the equilibrium is located at the origin, so we can restrict our class of Hamiltonian functions to homogeneous polynomials of degree 2.
Such Hamiltonian, $H$, defines a linear system with the matrix $A=\Omega\, \mathbf{H}_H$,where $ \mathbf{H}_H$ is the Hessian matrix of $H$.

Using symplectic transformations we can define equivalence classes of Hamiltonian systems.
Then we can choose one representative for each class.
We call this representative \textit{normal form} of the class.
The following proposition gives one such choice for nilpotent equilibria.

\begin{proposition}[\cite{williamson36}]
If $A$ is a $2m\times 2m$ real symmetric matrix whose Jordan form is a Jordan block of dimension $2m$ and eigenvalue zero, then there exists a symplectic matrix $P$ such that $A\,P=P\,A_0$ and $A_0$ corresponds to the Hamiltonian function
\begin{equation}
H_0(x)=\frac{1}{2} x^\intercal A_0 x= \pm \frac{1}{2}\left( \sum_{i=1}^{m-1}p_i p_{m-i}-\sum_{i=1}^{m}q_i q_{m+1-i} \right)-\sum_{i=1}^{m-1}p_i q_{i+1}.
\label{eq:williamson-norm-general}
\end{equation}
\end{proposition}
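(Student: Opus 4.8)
The plan is to work with the property of $A$ that actually makes the statement consistent: since $A=\Omega\,\mathbf{H}_H$ with $\mathbf{H}_H$ symmetric, $\Omega A$ is symmetric, i.e. $A$ is infinitesimally symplectic, so that $\omega(Au,w)=-\omega(u,Aw)$ for all $u,w$ (equivalently $A^\intercal\Omega+\Omega A=0$). (A genuinely symmetric real matrix is diagonalizable and could never be a single nontrivial Jordan block, so \emph{symmetric} here should be read as \emph{Hamiltonian}.) The hypothesis on the Jordan form says $A$ is nilpotent with $A^{2m}=0$ and $A^{2m-1}\neq0$, so there is a cyclic vector $v$ for which $v,Av,\dots,A^{2m-1}v$ is a basis of $\R^{2m}$. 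The whole proof is then an exercise in adapting this cyclic basis to the symplectic form.

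First I would record how $\omega$ interacts with the cyclic basis. Setting $c_k=\omega(v,A^k v)$ and pushing all powers of $A$ onto one side via skew-adjointness gives $\omega(A^i v,A^j v)=(-1)^i c_{i+j}$, which depends only on $i+j$ and the parity of $i$. Comparing this with the antisymmetry $\omega(A^i v,A^j v)=-\omega(A^j v,A^i v)$ forces $c_k=0$ whenever $k$ is even, while $c_k=0$ automatically for $k\ge 2m$. Hence the Gram matrix $G_{ij}=(-1)^i c_{i+j}$ of the cyclic basis is anti-triangular: it vanishes below the anti-diagonal $i+j=2m-1$, whose entries are the nonzero scalars $(-1)^i c_{2m-1}$. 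Since $\omega$ is nondegenerate, $\det G\neq0$, and for an anti-triangular matrix this determinant is $\pm c_{2m-1}^{\,2m}$; thus $c_{2m-1}\neq0$. Rescaling $v\mapsto\lambda v$ sends $c_{2m-1}\mapsto\lambda^2 c_{2m-1}$, so I can normalize $|c_{2m-1}|=1$, while $\operatorname{sign}(c_{2m-1})$ is a symplectic invariant — this is precisely the $\pm$ appearing in $H_0$.

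Next I would clean up the lower-order pairings. The remaining data are the scalars $c_k$ with $k$ odd and $k<2m-1$, and these can be annihilated by replacing $v$ with $w=q(A)v$ for a suitable polynomial $q$ with $q(0)\neq0$ (so that $w$ stays cyclic). Because $\omega(w,A^{2m-1}w)=q(0)^2 c_{2m-1}$ is unaffected (all higher powers of $A$ die), while the lower pairings $\omega(w,A^k w)$ are explicit polynomial expressions in the coefficients of $q$, one solves for these coefficients successively. I expect this to run as an induction on the number of surviving nonzero pairings — a Gram--Schmidt-type reduction with respect to the alternating form — reducing $G$ to its pure anti-diagonal part. With the adjusted generator $w$ in hand, I would reindex and rescale the vectors $A^i w$ into a Darboux basis $e_1,\dots,e_m,f_1,\dots,f_m$ (with $\omega(e_i,f_j)=\delta_{ij}$ and $\omega(e_i,e_j)=\omega(f_i,f_j)=0$), identify it with the canonical coordinates $(q_i,p_i)$, and read off that $A$ acts as the shift on this basis, which is exactly $A_0=\Omega\,\mathbf{H}_{H_0}$. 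The transition matrix $P$ is symplectic by construction and satisfies $AP=PA_0$.

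The main obstacle is the polynomial-adjustment step together with the final bookkeeping: showing the coefficients of $q$ can always be chosen to kill every lower pairing while preserving cyclicity and the normalized top pairing, and then checking that the reordered basis is genuinely Darboux and that $A$ in these coordinates reproduces \emph{exactly} the Hamiltonian $H_0$ in \eqref{eq:williamson-norm-general}, including the sign bookkeeping coming from the factors $(-1)^i$ in the Gram matrix. I would handle this either by the explicit induction sketched above or, more structurally, by recognizing $(A,v,\omega)$ as the standard model of the single-block nilpotent orbit in $\mathfrak{sp}(2m,\R)$ and matching it term by term against the model defined by $H_0$.
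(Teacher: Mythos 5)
The paper offers no proof of this proposition at all: it is imported as a black box from \cite{williamson36}, and the text proceeds straight to the $m=2$ specialization. So there is no internal argument to compare yours against; it has to stand on its own, and in outline it does. Your reading of the hypothesis is the right one (a genuinely symmetric real matrix is semi-simple, so the statement as printed is vacuous; the intended assumption is that $A$ is Hamiltonian, $A^\intercal\Omega+\Omega A=0$), and your skeleton is the classical cyclic-vector proof: the identity $\omega(A^iv,A^jv)=(-1)^i c_{i+j}$, the parity argument killing the even $c_k$, the anti-triangular Gram matrix forcing $c_{2m-1}\neq0$, and the identification of $\operatorname{sign}(c_{2m-1})$ with the $\pm$ in \eqref{eq:williamson-norm-general} --- a genuine invariant, since any other cyclic generator $q(A)v$ rescales $c_{2m-1}$ by the positive factor $q(0)^2$. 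The step you leave as a promise, annihilating the lower odd pairings, does close, and it is worth recording why: $\omega\bigl(q(A)v,A^k q(A)v\bigr)=\omega\bigl(v,A^k r(A)v\bigr)$ with $r(x)=q(x)q(-x)$, and every even polynomial $r$ with $r(0)>0$ arises this way (take $q=\sqrt{r}$ as a truncated power series, which is itself even), so the new pairings depend linearly and \emph{triangularly} on the free coefficients $r_0,r_2,r_4,\dots$ with the invertible entry $c_{2m-1}$ on the diagonal, and can be solved recursively. What remains genuinely unfinished is only the bookkeeping you flag yourself: after reordering the chain $A^i w$ into a Darboux basis, $A$ acts as a shift with signs, and matching that model to the particular crossed-term expression \eqref{eq:williamson-norm-general}, including verifying that both signs occur, requires writing out the interleaving of the chain vectors into the coordinates $(q_i,p_i)$. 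That computation is routine but it is exactly where Williamson's \emph{specific} normal form, as opposed to merely \emph{some} symplectic normal form of the nilpotent block, is obtained.
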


In the case of 2 degrees of freedom the Hamiltonian function \eqref{eq:williamson-norm-general} becomes
\begin{equation}
H_1(x)=\frac{1}{2}p_1^2-q_1 q_2-p_1 q_2.
\label{eq:will-norm-form-2d}
\end{equation}
An equivalent Hamiltonian function, used in \cite{cushman-sanders86} , is
\begin{equation}
H_0(x)=\frac{p_2^2}{2}-p_1 q_2.
\label{eq:lin-norm-form}
\end{equation}
From now on, the Hamiltonian \eqref{eq:will-norm-form-2d} will be refered to as \textit{Williamson normal form} and the Hamiltonian \eqref{eq:lin-norm-form} will be refered to as \textit{standard normal form}.

Let $H_1(x)=\frac{1}{2} x^\intercal A_1 x$ and $H_0(x)=\frac{1}{2} x^\intercal A_0 x$. Then a symplectic matrix $P$ that changes $H_1(x)$ to $H_0(x)$ has to satisfy $P\, \Omega \, A_0=\Omega \, A_1 \, P$ and $P^\intercal\,\Omega\, P=\Omega$. One such matrix is
\begin{equation}
P=
\left(
\begin{array}{cccc}
 0 & 1 & 0 & -1 \\
 0 & 0 & 1 & 0 \\
 0 & 0 & 0 & 1 \\
 -1 & 0 & 0 & 0
\end{array}
\right).
\label{eq:matrix_P}
\end{equation}
From here on, if the linear part of a Hamiltonian system in 2 degrees of freedom is nilpotent of degree 4, then it will be assumed to be in the standard normal form.

\subsection{Versal deformations of linear systems}
\label{ch:versal-def}

Given a linear system, the Jordan form of its matrix provides all the information needed to 
solve it.
However, if the system depends on parameters, the transform to Jordan form may be discontinuous with respect to the parameters.
In order to define an appropriate normal form we need some additional notions, \cite{arnold71}.


\begin{definition}
Let $\mu \in \mathbb{R}^n$ and $A(\mu) \in \mathbb{R}^{m^2}$ be a matrix whose elements are formal power series in $\mu$. If $A(0)=A_0$, then we say that $A(\mu)$ is \textit{a deformation of the matrix $A_0$}.
\end{definition}
 
\begin{definition}
A deformation $A(\mu)$ of $A_0$ is called \textit{versal}\footnote
{
The name \textit{versal} is obtained by the word \textit{universal} discarding the prefix \textit{uni} indicating uniqueness.
},
if for any other deformation $B(\nu)$ of $A_0$, there exists a map $\phi : \nu \rightarrow \mu$ smooth in a neighbourhood of the point $\nu=0$ and a symplectic matrix $S(\nu)$ depending smoothly on $\nu$, such that
\begin{equation*}
\begin{array}{c}
\phi(0)=0, \; S(0)=E, \\
B(\nu)=S(\nu)\, A(\phi(\nu))\, S^{-1}(\nu).
\end{array}
\end{equation*}
Moreover, a versa deformation is called \textit{universal} if the change of parameters $\phi$ is uniquely determined by the matrix $B(\nu)$.
\end{definition}

A versal deformation is the most general deformation there can be for a given matrix $A_0$ in the sense that it can be transformed into any other deformation.

%

Here we are interested in the deformations of Hamiltonian matrices.
A versal deformation for every Hamiltonian matrix in Williamson normal form is given in \cite{galin82}.

\paragraph{Versal deformation of nilpotent systems in 2 degrees of freedom} \hspace{0em}

It is proven in \cite{galin82} that a versal deformation of a linear nilpotent system in 2 degrees of freedom in Williamson normal form \eqref{eq:will-norm-form-2d} is the Hamiltonian
\begin{equation}
H_w(x)=\frac{1}{2}p_1^2-q_1 q_2-p_1 q_2+\mu_1 p_1 p_2+\nu_1 \frac{p_2^2}{2}.
\label{eq:galin-norm-form}
\end{equation}
This shows that the co-dimension of this linear system is 2.

\begin{figure}[p]
\centering
\includegraphics[width=\textwidth]{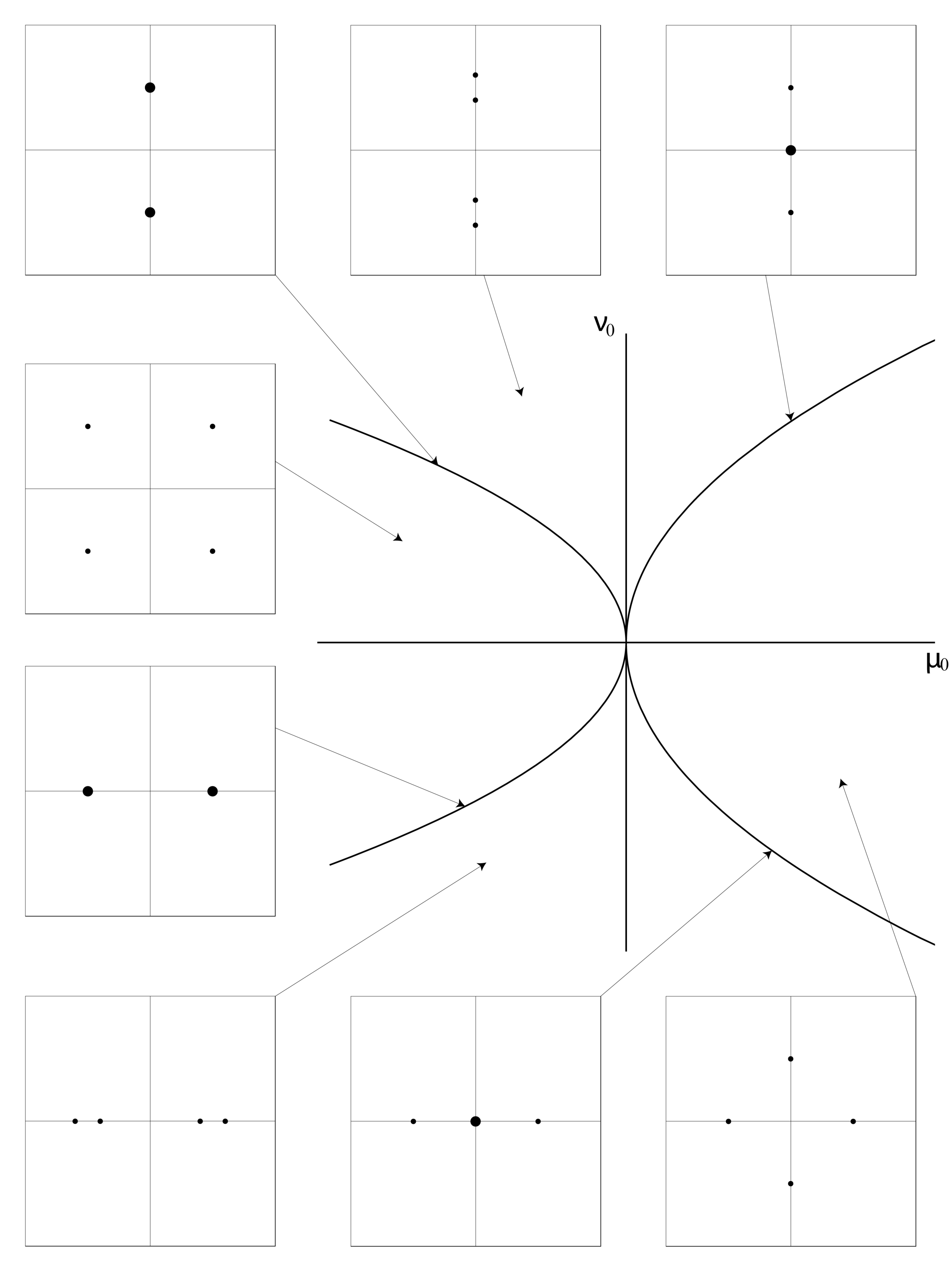}
\caption{Eigenvalue configurations of the Hamiltonian \eqref{eq:vers-deform-normal-form}.}
\label{fig:lin-eig-conf}
\end{figure}

\begin{proposition}
The Hamiltonian function
\begin{equation}
H_0(x)=\frac{p_2^2}{2}-p_1 q_2+\mu_0 \frac{q_1^2}{2}+\nu_0\left( \frac{q_2^2}{2}+\frac{3}{4}p_2 q_1\right)
\label{eq:vers-deform-normal-form}
\end{equation}
is a versal deformation of a linear nilpotent Hamiltonian system in the standard normal form \eqref{eq:lin-norm-form}.
\end{proposition}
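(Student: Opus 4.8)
The plan is to prove versality by transporting Galin's deformation \eqref{eq:galin-norm-form} through the explicit symplectic matrix $P$ of \eqref{eq:matrix_P} and then matching the result to \eqref{eq:vers-deform-normal-form} by a linear change of parameters together with a parameter-dependent symplectic map. The key structural fact I would use is that versality is preserved under a fixed symplectic conjugation: if $H_w$ is a versal deformation of the Williamson form $H_1$ of \eqref{eq:will-norm-form-2d}, and $P^\intercal A_1 P = A_0$ (which follows from the two conditions $P\,\Omega\,A_0=\Omega\,A_1\,P$ and $P^\intercal\Omega P=\Omega$ recorded before \eqref{eq:matrix_P}), then $H_w\circ P$ is a versal deformation of the standard form $H_0$ of \eqref{eq:lin-norm-form}. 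Carrying out the substitution $x=Py$ in \eqref{eq:galin-norm-form} makes the two linear cross terms cancel and yields the transported deformation $\tfrac12 p_2^2-p_1q_2-\mu_1 q_1p_2+\tfrac{\nu_1}{2}q_1^2$, whose quadratic part is exactly \eqref{eq:lin-norm-form} and whose deformation directions are $\{-q_1p_2,\ \tfrac12 q_1^2\}$.

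Next I would connect this transported form to \eqref{eq:vers-deform-normal-form}. With the parameter identification $\nu_1=\mu_0$ and $\mu_1=-\tfrac54\nu_0$ (an invertible linear change $\phi$), the difference between \eqref{eq:vers-deform-normal-form} and the transported deformation equals $\nu_0\bigl(\tfrac12 q_2^2+\tfrac34 p_2q_1\bigr)+\tfrac54\nu_0\,q_1p_2=-\tfrac{\nu_0}{2}\bigl(q_1p_2-q_2^2\bigr)$. Since $\{q_1q_2,H_0\}=q_1p_2-q_2^2$, this difference is $\{\,-\tfrac{\nu_0}{2}q_1q_2,\ H_0\}$, so the Hamiltonian flow of the quadratic generator $G=-\tfrac{\nu_0}{2}q_1q_2$ is a $\nu_0$-dependent symplectic map $S(\mu_0,\nu_0)$ carrying the transported Galin deformation onto \eqref{eq:vers-deform-normal-form}. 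Invariance of versality under $S$ then gives versality of \eqref{eq:vers-deform-normal-form}, with $\phi$ and $S$ exhibited explicitly.

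As a conceptual underpinning (and independent check) I would phrase versality as the infinitesimal transversality criterion for Hamiltonian matrices from \cite{galin82,arnold71}: on the $10$-dimensional space of quadratic forms the tangent space to the symplectic orbit of $H_0$ is $\im\ad_{H_0}$, where $\ad_{H_0}Q=\{Q,H_0\}=-q_2\,\partial_{q_1}Q+p_2\,\partial_{q_2}Q+p_1\,\partial_{p_2}Q$. Applying this operator to the ten monomials shows $\im\ad_{H_0}$ is $8$-dimensional; the decisive relation $\{q_1q_2,H_0\}=q_1p_2-q_2^2$ gives $q_1p_2\equiv q_2^2$ in the cokernel, so $\{[q_1^2],[q_2^2]\}$ is a basis of the $2$-dimensional quotient while every other monomial lies in the image. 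The two directions of \eqref{eq:vers-deform-normal-form} then project to $\tfrac12[q_1^2]$ and $(\tfrac12+\tfrac34)[q_2^2]=\tfrac54[q_2^2]$, a basis of the quotient, which reconfirms versality; the codimension $2$ matching Galin's count \eqref{eq:galin-norm-form}.

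The main obstacle is conceptual bookkeeping rather than computation. One must invoke the versality criterion in the symplectic category (the $\ad_{H_0}$/Poisson-bracket picture) rather than the full $GL$ action, and track that the transport by $P$ really lands on \eqref{eq:lin-norm-form}. A second subtlety worth flagging is that mere transversality does \emph{not} force the coefficient $\tfrac34$: any representative of the class $[q_2^2]$ independent of $[q_1^2]$ already yields a versal deformation. The value $\tfrac34$ is fixed only by the normalization that makes \eqref{eq:vers-deform-normal-form} the image of \eqref{eq:galin-norm-form} under the clean generator $G=-\tfrac{\nu_0}{2}q_1q_2$, equivalently by selecting $V_2=\tfrac12 q_2^2+\tfrac34 p_2q_1$ as the canonical lowest-weight representative of $\ker\ad_f$ in the $\mathfrak{sl}_2$-triple attached to the nilpotent $H_0$; recognizing this is where the genuine content lies.
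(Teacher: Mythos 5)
Your proposal contains two arguments of different status, and it is the one you present as merely an ``independent check'' that actually carries the proof. The transport of Galin's deformation \eqref{eq:galin-norm-form} through $P$ is fine (it is also the paper's first step), but the exact-matching claim that follows it has a genuine gap: with the \emph{linear} identification $\nu_1=\mu_0$, $\mu_1=-\tfrac54\nu_0$, the time-one flow of $G=-\tfrac{\nu_0}{2}q_1q_2$ does \emph{not} carry the transported family onto \eqref{eq:vers-deform-normal-form}. First, $\exp(\ad_G)$ produces, besides the desired correction $\bigl\{G,\tfrac{p_2^2}{2}-p_1q_2\bigr\}$, also the brackets of $G$ with the parameter terms and the higher iterates $\tfrac12\ad_G^2(\cdot)+\dots$, all of second order in $(\mu_0,\nu_0)$; so the two families agree only modulo $O\bigl(|(\mu_0,\nu_0)|^2\bigr)$. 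Second, and decisively, exact conjugacy is obstructed by spectral invariants: the squared eigenvalues of $J_0(\mu_0,\nu_0)$ are $\tfrac14\bigl(-5\nu_0\pm4\sqrt{\mu_0+\nu_0^2}\bigr)$ while those of the transported family are $\mp\mu_1\pm\sqrt{\nu_1}$, and matching them forces not only $\nu_0=\mp\tfrac45\mu_1$ but also $\mu_0+\nu_0^2=\nu_1$, i.e.\ $\mu_0=\nu_1-\tfrac{16}{25}\mu_1^2$: the parameter change must be nonlinear, and under your linear $\phi$ the spectra already disagree at order $\mu_1^2$. (That quadratic correction is exactly the map $\phi$ in the paper's proof.) What your computation really establishes is that the two families agree to first order modulo $\im\,\ad_{H_0}$ --- transversality data, not an equivalence of deformations, and the definition of versality demands exact equivalence.

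Fortunately, your third paragraph is a complete and correct proof on its own, and by a route genuinely different from the paper's. The computation checks out: applying $\ad_{H_0}=-q_2\partial_{q_1}+p_2\partial_{q_2}+p_1\partial_{p_2}$ to the ten quadratic monomials shows that $\im\,\ad_{H_0}$ contains $q_1q_2,\,q_1p_1,\,q_2p_1,\,q_2p_2,\,p_1^2,\,p_1p_2,\,p_2^2$ and $q_1p_2-q_2^2$, hence is $8$-dimensional, with cokernel basis $\{[q_1^2],[q_2^2]\}$ and relation $[q_1p_2]=[q_2^2]$; the deformation directions $\tfrac12 q_1^2$ and $\tfrac12 q_2^2+\tfrac34 p_2q_1$ project to $\tfrac12[q_1^2]$ and $\tfrac54[q_2^2]$, a basis of the quotient, and versality follows from the transversality criterion in the symplectic category \cite{arnold71,galin82}. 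The paper argues differently: it never verifies transversality, but instead invokes versality of the transported Galin family and constructs an \emph{exact} equivalence onto \eqref{eq:vers-deform-normal-form} --- the nonlinear $\phi$ above together with an explicit parameter-dependent symplectic matrix $S(\mu_1,\nu_1)$ --- and concludes by transitivity of ``factors through''. Your route is more self-contained (it needs only the criterion, not an exact normal form for the conjugacy) and yields the codimension count for free; the paper's route yields explicit formulas for $\phi$ and $S$. To repair your write-up, promote the transversality computation to the main proof, and either drop the exact-matching claim or fix it with the nonlinear $\phi$ and an exact $S$ as in the paper; your closing observation that the coefficient $\tfrac34$ is not forced by versality is correct and is consistent with the transversality picture.
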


\begin{proof}
With a symplectic transformation by the matrix $P$ in \eqref{eq:matrix_P} the Hamiltonian \eqref{eq:galin-norm-form} becomes
\begin{equation*}
H_1(x)=\frac{p_2^2}{2}-p_1 q_2+\nu_1 \frac{q_1^2}{2}+\mu_1 p_2 q_1
\end{equation*}
and therefore a versal deformation of a nilpotent system at the standard normal form.

Let $J_0(\mu_0,\nu_0)$ and $J_1(\mu_1,\nu_1)$ be the Hamiltonian matrices of the Hamiltonian functions $H_0(x)$ and $H_1(x)$, respectively. Since $H_1(x)$ is a versal deformation, it is sufficient to show that there exists a map $\phi : (\mu_1,\nu_1) \rightarrow (\mu_0,\nu_0)$, smooth in a neighbourhood of the point $(0,0)$, and a symplectic matrix $S(\mu_1,\nu_1)$ depending smoothly on $(\mu_1,\nu_1)$, such that
\begin{equation*}
\begin{array}{c}
\phi(0,0)=(0,0), \; S(0,0)=E, \\
J_1(\mu_1,\nu_1)=S((\mu_1,\nu_1))\, J_0(\phi((\mu_1,\nu_1)))\, S^{-1}((\mu_1,\nu_1)).
\end{array}
\end{equation*}
The eigenvalues of $J_0(\mu_0,\nu_0)$ are $\pm \frac{1}{2} \sqrt{-5 \nu_0\pm 4 \sqrt{\mu_0+\nu_0^2}}$ and the eigenvalues of $J_1(\mu_1,\nu_1)$ are $\pm\sqrt{-\mu_1 \pm\sqrt{\nu_1}}$. From this we find 
\begin{equation*}
(\mu_0,\nu_0)=\phi(\mu_1,\nu_1)=(\nu_1-\frac{16}{25}\mu_1^2,-\frac{4}{5}\mu_1).
\end{equation*}
Then we search for a matrix that satisfies both
\begin{equation*}
J_1(\mu_1,\nu_1)S((\mu_1,\nu_1))=S((\mu_1,\nu_1)) J_0(\phi((\mu_1,\nu_1)))\\
\end{equation*}
and
\begin{equation*}
S^\intercal((\mu_1,\nu_1))\,\Omega\, S((\mu_1,\nu_1))=\Omega.
\end{equation*}
Recall that $\Omega$ is the $4 \times 4$ matrix $\left( \begin{smallmatrix} 0&E\\ -E&0 \end{smallmatrix} \right)$ and $E$ the $2 \times 2$ identity matrix.
One such matrix is
\begin{equation*}
S(\mu_1,\nu_1)=\left(
\begin{array}{cccc}
 1 & 0 & 0 & 0 \\
 0 & 1 & 0 & 0 \\
 0 & -\frac{2}{5}\mu_1 & 1 & 0 \\
 -\frac{2}{5}\mu_1 & 0 & 0 & 1
\end{array}
\right).
\end{equation*}
\end{proof}

The eigenvalue configurations of the Hamiltonian \eqref{eq:vers-deform-normal-form} is given in Figure \ref{fig:lin-eig-conf}. Zero eigenvalues occur when $\mu_0=\frac{9}{16}\nu_0^2$ and two pairs of double eigenvalues occur when $\mu_0=-\nu_0^2$.

\subsection{Non-linear normal form}
\label{ch:norm-form-Ham-systems}

Similarly to the linear case, we can define equivalence classes in the set of Hamiltonian systems using canonical transformations and we can choose one representative of each class.
This representative is called the \textit{normal form}.
We can construct the required canonical transformations by using the flow of chosen Hamiltonian systems.

Let $M$ be a symplectic manifold.
A Hamiltonian system on $M$ defines an one-parameter flow on $M$ and by fixing the parameter it can be viewed as map from $M$ to $M$.
The key observation is that this defines a canonical transformation.
We will describe briefly the procedure in this section.
For a more detailed description see \cite{arnold06}.

\begin{definition}
Let $(M,\omega)$ be a symplectic $2m$-dimensional manifold, $f,g \in C^\infty(M)$ and $X_f$, $X_g$ the Hamiltonian vector fields of $f$ and $g$ respectively. The bilinear map $\{\,,\,\}:C^\infty(M)\times C^\infty(M) \rightarrow C^\infty(M)$ defined by 
\begin{equation*}
\{f,g\}=\omega(X_f,X_g)
\end{equation*}
is called the \textit{Poisson bracket} of $f$ and $g$. In local canonical coordinates the Poisson bracket takes the form
\begin{equation*}
\{f,g\}=\sum_{i=1}^m \partial_{q_i}f \,\partial_{p_i}g-\partial_{p_i}f \,\partial_{q_i}g.
\end{equation*}
\end{definition}

\begin{definition}
 By fixing $f$ we get the linear map $\{f,\,\}:C^\infty(M) \rightarrow C^\infty(M)$. This is called the \textit{adjoint map} of $f$ $$\ad_f(\,):=\{f,\,\}.$$
\end{definition}

Let $ x=(q_1,\dots,q_m,p_1,\dots,p_m)^\intercal$.
Abusing notation, let us denote the function $x\mapsto q_i$ by $q_i$. Similarly for $p_i$.
Then Hamilton's equations can be written as
$$\dot{x}=-\ad_H (x).$$
The formal solution of this equation is
$$ x(t)=\phi^t_H(x_0):=\exp(-t\,\ad_H)(x_0).$$
The exponential defined by
\begin{equation*}
\exp(-t\,\ad_f)(\,):=\sum_{n=0}^\infty \frac{(-1)^n }{n!}\,t^n\ad_f^n(\,)
\end{equation*}
and has the property $\exp(\ad_f)(\{g,h\})=\{\exp(\ad_f)(g),\exp(\ad_f)(h)\},$
and by this it can be shown that the transformation $x\mapsto \exp(\ad_f)(x)$ defines a (formal) canonical transformation, see \cite{basov10}.

\subsubsection{Normal forms near equilibria}

In order to study the dynamics in a neighbourhood of an equilibrium we recall that any neighbourhood on $M$ is diffeomorphic to a neighbourhood in $\mathbb{R}^{2m}$.
So without loss of generality we can study the corresponding Hamiltonian system on a neighbourhood of the origin in $\mathbb{R}^{2m}$.

Let $\mathcal{P}_n$ be the vector space of homogeneous polynomials of degree $n$ on $\mathbb{R}^{2m}$. Then for $f \in \mathcal{P}_n$ the adjoint map $\ad_f:\mathcal{P}_m \rightarrow \mathcal{P}_{m+n-2}$ maps homogeneous polynomials to homogeneous polynomials. In particular when $f \in \mathcal{P}_2$, it holds that $\ad_f:\mathcal{P}_m \rightarrow \mathcal{P}_m$.

Let $H$ be a Hamiltonian function without constant term.
Since the origin is an equilibrium, it has no first order terms either, so we have
\begin{equation*}
H=\sum_{k=2}^m H_k+R_m,
\end{equation*}
with $H_k \in \mathcal{P}_k$ and $R_m$ satisfying $R_m(0)=D R_m(0)=\dots=D^{m} R_m(0)=0$.

We define the canonical transformation $\phi_f:=\phi^1_f=\exp(-\ad_f)$, with $f\in C^\infty(\mathbb{R}^{2m})$, and we have
\begin{equation*}
H\circ\phi_f=\sum_{k=2}^\nu H_k+\{H_2,f\}+\sum_{k=3}^\nu\{H_k,f\}+\dots,
\end{equation*}
see \cite{hanssmann06}.

This shows that at the term $H_k$ any element of the image of $\ad_{H_2}$ can be added.
The key observation is that if $f \in \mathcal{P}_n$, then any $\{H_k,f\}$ with $k>2$ gives a homogeneous polynomial of degree greater than $n$.
Then $f$ can be chosen to be such that $\{H_2,f\}$ gives the desired terms and $H_n$ can be normalized without having any effect on the lower order terms.
This means that $f$ can be chosen initially to be in $\mathcal{P}_3$ to normalize $H_3$ without producing terms in $\mathcal{P}_2$, then it can be chosen to be in $\mathcal{P}_4$ to normalize $H_4$ without producing terms in $\mathcal{P}_2$ and $\mathcal{P}_3$.
One may continue and inductively normalize all $H_n$ up to any power.

We see that since we can freely add elements of $\im\,\ad_{H_2}$ to $H_k$ by choosing $f$, we can choose to transform it to $\hat{H}_k$ such that $\vspan(\hat{H}_k)\cap \im\,\ad_{H_2}=\{0\}$, with $\vspan(\hat{H}_k)$ being the linear subspace of $\mathcal{P}_k$ spanned by the monomials in $H_k$.
In this way $H_2$ determines the normal form of $H$.

A standard result from representation theory states that if $H_2=S+N$ is the Jordan decomposition of $H_2$, then $\ad_{H_2}=\ad_S+\ad_N$ is the Jordan decomposition of $\ad_{H_2}$, see \cite{humphreys78}. So in order to check whether $\ad_{H_2}$ is semi-simple it is sufficient to check whether the linearized system at the equilibrium has a semi-simple matrix.

If $\ad_{H_2}:\mathcal{P}_k \rightarrow \mathcal{P}_k$ is semi-simple, its eigenvectors span the whole space and it can be diagonalized over $\mathbb{C}$. We immediately see that $\mathcal{P}_k=\im\,\ad_{H_2}\oplus\ker\,\ad_{H_2}$, which holds  also over $\mathbb{R}$.

On the other hand, if the linear part of $H$ is $H_2=S+N$, with $S,N\in \mathcal{P}_2$, $N\ne0$, such that $\ad_{H_2}=\ad_S+\ad_N$ is the Jordan decomposition of $\ad_{H_2}$ and $\ad_N$ is nilpotent.
Then the system can be normalized further.

It was shown in \cite{cushman-sanders86} that $N$ can be embedded in a subalgebra of $\ker\,\ad_S$ that is isomorphic to $\mathfrak{sl}(2,\mathbb{R})$, i.e. there are elements $M,T\in \ker\,\ad_S \cap \mathcal{P}_2$ such that
\begin{equation*}
\{N,M\}=T, \;\; \{N,T\}=2N, \;\; \{M,T\}=-2M.
\end{equation*}
By this the splitting
\begin{equation*}
(\ker\,\ad_S \cap \ker\,\ad_M) \oplus (\ker\,\ad_S\cap\im\,\ad_N) =\ker\,\ad_S
\end{equation*}
is derived,
which ensures that the normalization can be done in two steps. Initially, the system is normalized with respect to $S$ and then with respect to $N$ without undoing the achievements of the first step, see \cite{vdMeer82}.

\subsection{Bifurcations of equilibria}

We will briefly discuss the two bifurcations of equilibria that will appear in the present analysis.

\subsubsection{Centre-saddle bifurcation}

Intuitively, the centre-saddle bifurcation happens when two equilibria collide and disappear.
In order for this to happen, the eigenvalues of the two equilibria have to converge and at least one eigenvalue must vanish.

In Hamiltonian systems, since if $\lambda$ is an eigenvalue then $-\lambda$, $\overline{\lambda}$ and $-\overline{\lambda}$ are also eigenvalues, the eigenvalues vanish always in pairs.
The simplest case of this bifurcation is in a one degree of freedom system, as shown in Example \ref{ex:centre-saddle-bif}.
This bifurcation is called \textit{centre-saddle bifurcation} because at this simple case it involves a center and a saddle.
For reasons that will be apparent later, this bifurcation is also called \textit{fold bifurcation}.

\begin{example}
Consider the Hamiltonian $H=\frac{1}{2}p^2-\frac{1}{3}q^3+\mu q$.
Its equations of motion are
\begin{equation*}
\begin{split}
\dot{q}&=p,\\
\dot{p}&=q^2-\mu.
\end{split}
\end{equation*}
If $\mu>0$, the equilibria are $(q_0,p_0)=(\pm \sqrt{\mu},0)$.
The equilibrium $(\sqrt{\mu},0)$ is a saddle and the equilibrium $(-\sqrt{\mu},0)$ is a centre.
When $\mu=0$, there is only one equilibrium $(0,0)$ with a $2\times2$ Jordan block of zero eigenvalue.
Finally, when $\mu<0$ the system has no equilibria.
\label{ex:centre-saddle-bif}
\end{example}

If the non-vanishing eigenvalues are real, then it was shown in \cite{Carr06} that there always exists a manifold, called the \textit{centre manifold}, on which the system takes the form of the Example \ref{ex:centre-saddle-bif}.
If the non-vanishing eigenvalues are imaginary, then it was shown in \cite{MR1227332} that the same can be done in integrable and near integrable systems.

However, the linear part of the system can always be transformed to the form of two separable linear systems, one nilpotent and another either hyperbolic or elliptic.
Then a centre-saddle bifurcation happens as long as the coefficient of the term $q^3$ does not vanish.

\subsubsection{Hamiltonian-Hopf bifurcation}
\label{sect:ham-hopf-bif}

The simplest generic case of the Hamiltonian-Hopf bifurcation happens in 2 degrees of freedom.
The system has to have a double eigenvalue at the bifurcation.
There are two possibilities of a system having double non semi-simple eigenvalues:
\begin{itemize}
 \item A system with four imaginary eigenvalues that meet on the imaginary axis and split into four complex eigenvalues.
 \item A system with four real eigenvalues that meet on the real axis and split again into four complex.
\end{itemize}
The second case is not viewed as a bifurcation because the equilibrium is hyperbolic throughout the process.
A system of the first form is given in Example \ref{ex:hamiltonian-hopf-bif-lin}.

\begin{example}
Consider the Hamiltonian system on $\mathbb{R}^4$ given by the Hamiltonian $H=q_1 p_2-q_2 p_1+\frac{1}{2}(q_1^2+q_2^2)+\frac{\mu}{2}(p_1^2+p_2^2)$. This gives the following equations of motion
\begin{align*}
\dot{q_1}&=-q_2+\mu p_1, \\
\dot{q_2}&=q_1+\mu p_2, \\
\dot{p_1}&= -q_1-p_2, \\
\dot{p_2}&= -q_2+p_1.
\end{align*}
The equilibrium is the point $(0,0)$ with eigenvalues $\pm\sqrt{-1 -\mu \pm 2 \sqrt{\mu } }$. If $\mu<0$ the system has four complex eigenvalues.
As $\mu$ goes to zero the eigenvalues tend to $\pm \ii$ and then they split to pairs $\pm \lambda_1 \ii$ and $\pm \lambda_2 \ii$ as $\mu$ becomes positive.
\label{ex:hamiltonian-hopf-bif-lin}
\end{example}

A Hamiltonian system with linear part as in Example \ref{ex:hamiltonian-hopf-bif-lin} generically undergoes the Hamiltonian-Hopf bifurcation.
In general the linear part of the normal form of such systems is
$$H_2=\omega S+\alpha N+\mu M,$$
where
\begin{align*}
 S&=q_1 p_2-q_2 p_1, \\
 N&=\frac{1}{2}\big(q_1^2+q_1^2\big), \\
 M&=\frac{1}{2}\big(p_1^2+p_1^2\big).
\end{align*}
Since $S$ corresponds to a semi-simple Hamiltonian matrix, it is called the semi-simple part of the Hamiltonian and $N$ is called the nilpotent part.

The normal form of such a system generically is
\begin{equation*}
\begin{split}
 H &=\omega S+\alpha N+\mu M+b M^2 \\
    &\phantom{=}+c S M+d S^2+\dots .
\end{split}
\end{equation*}
Under the condition that $\alpha$ and $b$ are non-zero, the terms in the second line do not influence qualitatively the system.

Depending on the sign of $\alpha b$, there are two types of the bifurcation.
If $\alpha b$ is positive the bifurcation is called supercritical and almost all orbits of the system are quasi-periodic.
This is called \textit{soft loss of stability}, since the orbits are bounded before and after the bifurcation even if the stability of the equilibrium changes.
If $\alpha b$ is negative the bifurcation is called subcritical and the majority of the orbits leave any neighbourhood of the origin.
This is called \textit{hard loss of stability}.
See \cite{vdMeer82,vdMeer85,vdMeer86}.

\section{Nilpotent equilibrium}
\label{ch:results}

Let $H$ be a Hamiltonian function on $\mathbb{R}^4$ and let the linear part be nilpotent of degree 4.
Then there is a linear symplectic transformation that will transform the linear part into
\begin{equation*}
H_2=\frac{p_2^2}{2}-p_1 q_2.
\end{equation*}
It is shown in \cite{cushman-sanders86} that the monomials in the kernel of $\ad\,H_2$ are generated by
\begin{align*}
& q_1,\;\, \frac{q_2^2}{2}+\frac{3}{4}p_2 q_1,\;\,\frac{3 p_1 q_1^2}{2}+\frac{3 p_2 q_1 q_2}{2}+\frac{2 q_2^3}{3} \\
&\text{and } \frac{3}{4}p_1^2 q_1^2+\frac{3}{2}p_1 p_2 q_1 q_2-\frac{1}{2}p_2^3 q_1+\frac{2}{3}p_1 q_2^3-\frac{1}{4}p_2^2 q_2^2.
\end{align*}
Thus a Hamiltonian with $H_2$ as linear part in normal form will be of the form
\begin{equation}
\begin{split}
 H_{\kappa,\mu,\nu} =& \frac{p_2^2}{2}-p_1 q_2 +\kappa q_1 +\mu \frac{ q_1^2}{2}+\nu\left( \frac{q_2^2}{2}+\frac{3}{4}p_2 q_1 \right)+a_1 \frac{ q_1^3}{6} \\
&+a_2\, q_1 \left( \frac{q_2^2}{2}+\frac{3}{4}p_2 q_1 \right) +a_3 \left( \frac{3 p_1 q_1^2}{2}+\frac{3 p_2 q_1 q_2}{2}+\frac{2 q_2^3}{3} \right)+\dots
\end{split}
\label{eq:nilpot_Ham_general}
\end{equation}

Notice that $\kappa$, $\mu$ and $\nu$ can change the nilpotence of the equilibrium, but $a_1$, $a_2$ and $a_3$ cannot.
For this reason $a_1$, $a_2$ and $a_3$ will be considered to be parameters and $a_1$, $a_2$ and $a_3$ will be considered to be constants.
Comparing the above Hamiltonian with the versal deformation of the linear system, we notice that we have an extra parameter $\kappa$.
For this reason one can expect that the parameters can be reduced to two.
However there is no obvious way in which this can be done at the present stage.
Later in the article we will reconcile the linear with the non-linear theory.

\begin{theorem}
If $a_1 \ne 0$ then for a small enough neighbourhood of the origin in $(q_1,q_2,p_1,p_2)$ and a small enough neighbourhood of the origin in $(\kappa,\mu,\nu)$,  the versal unfolding of the nilpotent equilibrium is given by surfaces diffeomorphic to the surfaces depicted in Figure \ref{fig:hopf-fold-real}.
On the red surface the Hamiltonian-Hopf bifurcation happens and it is always of the supercritical type.
On the blue surface the fold bifurcation happens.
On the black line where the two surfaces meet, the equilibrium is nilpotent.
\end{theorem}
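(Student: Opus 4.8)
The plan is to reduce the statement to a finite computation with equilibria and their linearizations, using the versal deformation of the linear part, equation \eqref{eq:vers-deform-normal-form}, as a dictionary between the nonlinear picture and the known eigenvalue configurations of Figure \ref{fig:lin-eig-conf}. First I would pass to the truncated Hamiltonian in which the $a_2$ and $a_3$ cubic terms (and all higher-order terms) are dropped; the hypothesis $a_1\neq 0$ is what makes this legitimate, and the rigorous justification is deferred to Section \ref{ch:truncated-hamiltonian}. Writing Hamilton's equations $X_H=\Omega\,\nabla H$ for the truncated $H_{\kappa,\mu,\nu}$ and imposing $X_H=0$, the equations $\dot q_1=\dot p_2=0$ force $q_2=p_1=0$, the equation $\dot q_2=0$ gives $p_2=-\tfrac{3}{4}\nu q_1$, and the remaining equation collapses to the single quadratic
\begin{equation*}
\frac{a_1}{2}\,q_1^2+\Big(\mu-\frac{9}{16}\nu^2\Big)q_1+\kappa=0 .
\end{equation*}
Because $a_1\neq 0$ this is a genuine quadratic, so the system has two, one, or no equilibria according to the sign of the discriminant $\big(\mu-\tfrac{9}{16}\nu^2\big)^2-2a_1\kappa$. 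The blue (fold) surface is exactly the vanishing of this discriminant, where two equilibria coalesce.

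Second, I would linearize at an equilibrium $q_1=q_1^{*}$. Computing the Hessian and premultiplying by $\Omega$, the Hamiltonian matrix is precisely that of the versal deformation \eqref{eq:vers-deform-normal-form} with effective parameters $(\mu_0,\nu_0)=(\mu+a_1q_1^{*},\,\nu)$. I can therefore import the eigenvalue formula $\lambda=\pm\tfrac12\sqrt{-5\nu\pm4\sqrt{\mu+a_1q_1^{*}+\nu^2}}$ together with the critical loci recorded after \eqref{eq:vers-deform-normal-form}: a zero eigenvalue occurs when $\mu+a_1q_1^{*}=\tfrac{9}{16}\nu^2$, and a double non-semisimple eigenvalue occurs when $\mu+a_1q_1^{*}=-\nu^2$. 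A short check shows that at a double root one has $q_1^{*}=-(\mu-\tfrac{9}{16}\nu^2)/a_1$, hence $\mu+a_1q_1^{*}=\tfrac{9}{16}\nu^2$, so the fold locus indeed carries the zero eigenvalue, as it must; moreover the sign of $\nu$ selects the normally hyperbolic versus elliptic case. The red (Hamiltonian--Hopf) surface is obtained by substituting $a_1q_1^{*}=-\nu^2-\mu$ back into the quadratic, yielding a smooth relation $\kappa=\kappa(\mu,\nu)$ (solvable because $a_1\neq 0$), restricted to $\nu>0$ so that the coincident eigenvalues are imaginary rather than real. Imposing both critical conditions at once forces $\tfrac{9}{16}\nu^2=-\nu^2$, i.e. $\nu=0$ and $\mu^2=2a_1\kappa$; this curve lies on both surfaces, is the locus where the linearization is again nilpotent of degree $4$, and is the black line of the statement.

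To match the diffeomorphism type of Figure \ref{fig:hopf-fold-real}, I would verify that the two defining functions are smooth with non-vanishing gradients away from the common curve and that the curve is a cuspidal (transverse) intersection of the sheets, so that the configuration is a versal model up to diffeomorphism; this is a routine application of the implicit function theorem once the explicit formulas above are in hand. It remains to confirm that the higher-order terms discarded in the truncation only perturb these surfaces by a diffeomorphism on a sufficiently small neighbourhood, which is the content of Section \ref{ch:truncated-hamiltonian}.

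The main obstacle, and the one genuinely new computation, is showing that the Hamiltonian--Hopf bifurcation on the red surface is always \emph{supercritical}. Following \S\ref{sect:ham-hopf-bif}, I would translate the relevant equilibrium to the origin, bring the quadratic part into the form $\omega S+\alpha N$ with $\omega=\tfrac12\sqrt{5\nu}$, and then normalize the quartic part to read off the coefficient $b$ of $M^2$; supercriticality is the sign condition $\alpha b>0$. The delicate point is that $b$ receives contributions both from the genuine quartic terms and from second-order effects of the cubic $a_1q_1^3/6$ through the homological equation, and one must show these combine into a quantity of fixed sign. I expect $b$ to come out proportional to $a_1^{2}$ times a positive factor, which would make $\alpha b$ sign-definite and independent of the signs of the individual parameters, giving supercriticality precisely when $a_1\neq 0$. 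Carrying out this normal-form reduction and pinning down the sign is the crux of the argument; the remaining steps are bookkeeping organized by the dictionary with \eqref{eq:vers-deform-normal-form}.
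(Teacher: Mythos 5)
Your dictionary observation is correct and attractive: at an equilibrium $q_1^{*}$ of the truncated system the quadratic part of the shifted Hamiltonian is exactly the linear versal deformation \eqref{eq:vers-deform-normal-form} with $(\mu_0,\nu_0)=(\mu+a_1q_1^{*},\nu)$, so the fold locus $\mu+a_1q_1^{*}=\tfrac{9}{16}\nu^2$, the Hopf locus $\mu+a_1q_1^{*}=-\nu^2$ (with $\nu>0$), and the nilpotent curve $\nu=0$, $\mu^2=2a_1\kappa$ all come out right, in agreement with the paper. However, the proposal has a genuine gap at what you yourself identify as the crux. Supercriticality is left as an expectation (``I expect $b$ to come out proportional to $a_1^2$ times a positive factor''), but this is precisely the computation that constitutes the heart of the paper's proof: after the explicit symplectic transformation bringing the linear part to $\tfrac{\omega}{2\sqrt2}(p_2q_1-p_1q_2)+\tfrac12(q_1^2+q_2^2)$, unfolding with $\mu=\beta+\mu_h$ and normalizing, the coefficient of $M^2$ satisfies $\omega^8C_m=\tfrac{320}{3}a_1^2+O(q_0,\omega^2)$ (equation \eqref{eq:m-coefficient}), and only the observation that $\omega^2$ is of the same order as $q_0$ and $\nu$ lets the $a_1^2$ term dominate near the origin. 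Until this normal-form computation is carried out and the sign pinned down, the statement that the Hamiltonian--Hopf bifurcation is \emph{always supercritical} is unproven; note also that the paper performs this computation for the full third-order system (with $a_2,a_3$ present), not only for the truncated one.

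A second, concrete error: your proposed verification that the black line ``is a cuspidal (transverse) intersection of the sheets'' would fail, because the fold and Hopf surfaces are \emph{not} transverse there. They are tangent along the nilpotent line, and the tangency at the origin is of third order: for the truncated system the two sheets are $\kappa=\tfrac{1}{2a_1}\bigl(\mu-\tfrac{9}{16}\nu^2\bigr)^2$ and $\kappa=\tfrac{1}{16a_1}(\mu+\nu^2)(8\mu-17\nu^2)$, whose difference is $\tfrac{625}{512a_1}\nu^4$; in the reduced parameters of Section \ref{ch:param_redux} they read $512\alpha=81\beta^4$ and $16\alpha=-17\beta^4$. The paper stresses exactly this point (correcting an earlier depiction of a first-order tangency), and the diffeomorphism type of Figure \ref{fig:hopf-fold-real} depends on it. Two lesser gaps: your justification for truncating is circular as stated --- Section \ref{ch:truncated-hamiltonian} does not prove that discarding $a_2$, $a_3$ and higher-order terms is harmless; that is the content of the Lemma in Section \ref{ch:results}, which must be established (the paper does so by proving everything for the full cubic system first). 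And you never check the fold nondegeneracy condition: the collision and disappearance of the two roots of your quadratic shows the equilibria merge, but a genuine normally hyperbolic or elliptic centre-saddle bifurcation requires the nonvanishing of the cubic coefficient in the reduced normal form, which the paper verifies via $\omega^3C^{\pm}_{q_1^3}=-\tfrac{a_1}{6}+O(q_0,\omega^2)\neq0$ when $a_1\neq0$.
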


Notice that even though the Hamiltonian system has 3 parameters only bifurcations of co-dimension 1 appear in the unfolding.

At the origin the two surfaces have a 3rd order tangency.
This implies that as the neighbourhood in the space of parameters becomes small, the space between them shrinks fast.
The Hamiltonian-Hopf bifurcation happens in a system at 1:-1 resonance and the elliptic fold bifurcation happens in a system at 1:0 resonance.
This implies that in the space between the two surfaces every resonance appears.

This section and the next one are dedicated to the proof of the theorem.
From now on we will assume that the coefficient $a_1$ does not vanish.

\begin{figure}[p]
\begin{tikzpicture}
 \node at (6,6) {\includegraphics[width=200pt]{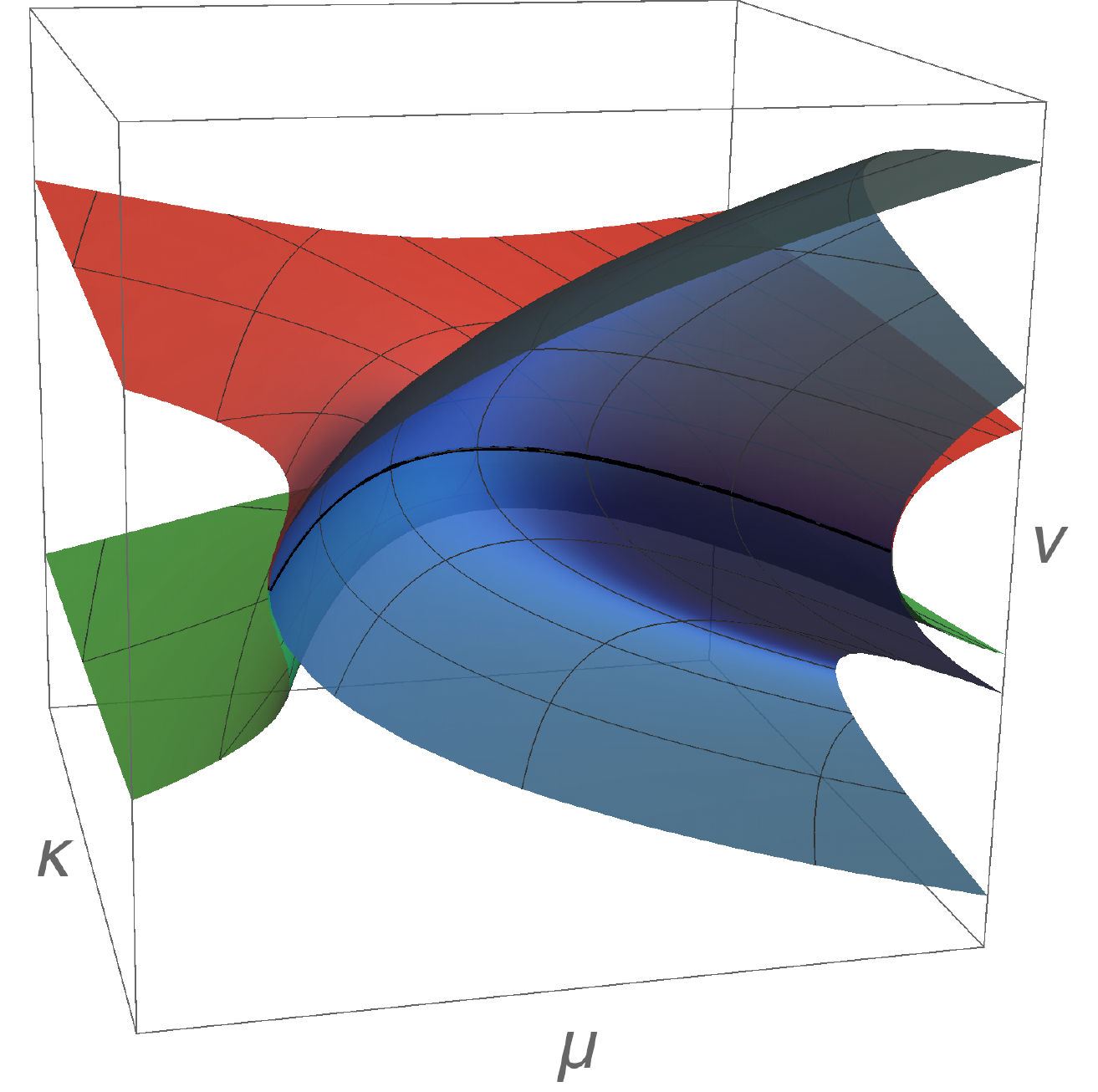}};
 \draw[->] (7.3,10.5) -- (8,8.48);
 \draw[->] (4.5,10.5) -- (5.5,8.48);
 \draw[->] (1.4,10.51) -- (2.7,8.35);
 \draw[->] (1.5,7.5) -- (3.5,6.35);
 \draw[->] (1.5,4.5) -- (3.18,4.8);
 \draw[->] (1.5,1.4) -- (4.7,4);
 \draw[->] (5.5,1.5) -- (6.7,4.2);
 \draw[->] (8.5,1.5) -- (6.4,5.5);
 \node at (0,12) {\includegraphics[width=85pt]{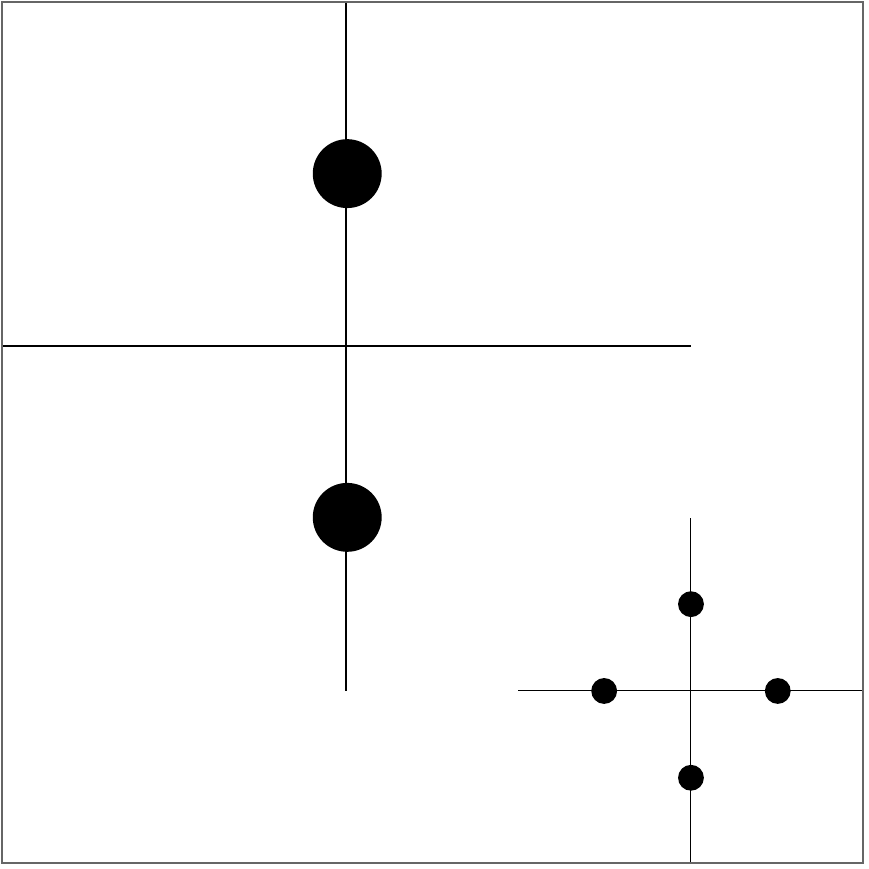}};
 \node at (0,8) {\includegraphics[width=86pt]{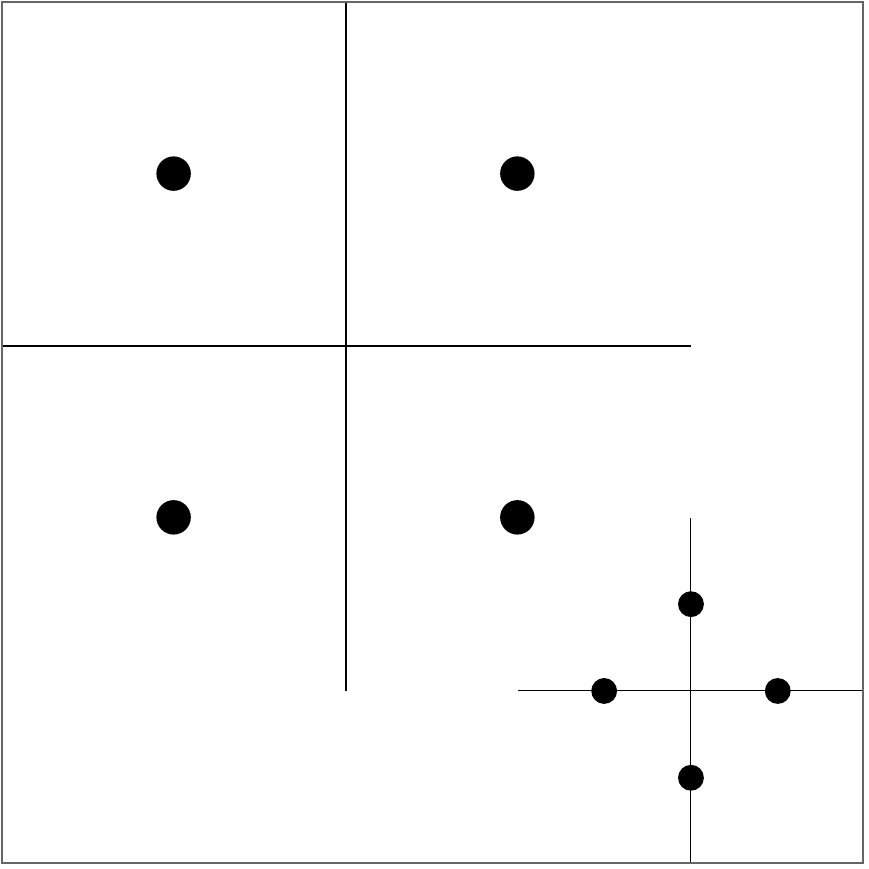}};
 \node at (0,4) {\includegraphics[width=86pt]{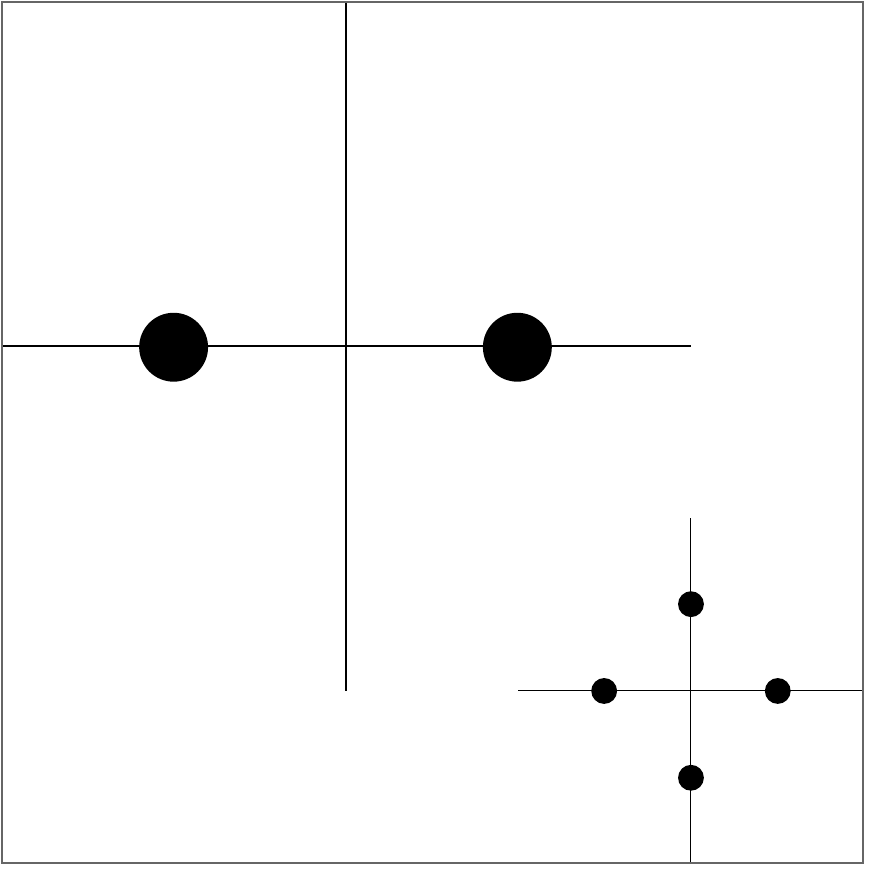}};
 \node at (0,0) {\includegraphics[width=86pt]{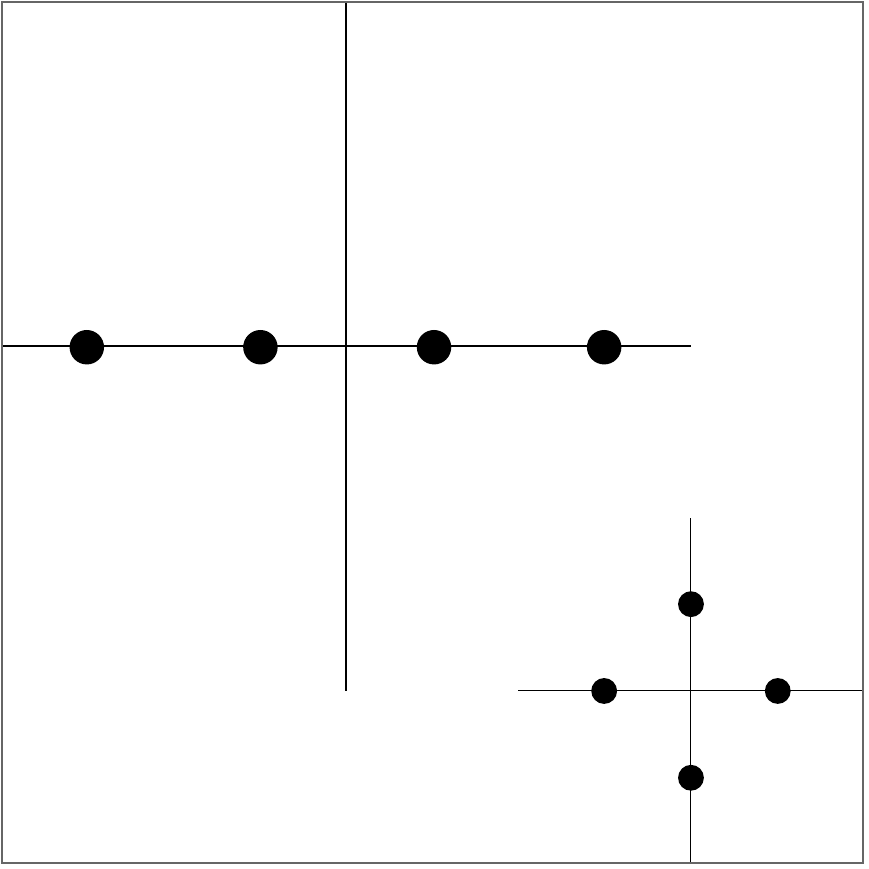}};
 \node at (4,0) {\includegraphics[width=86pt]{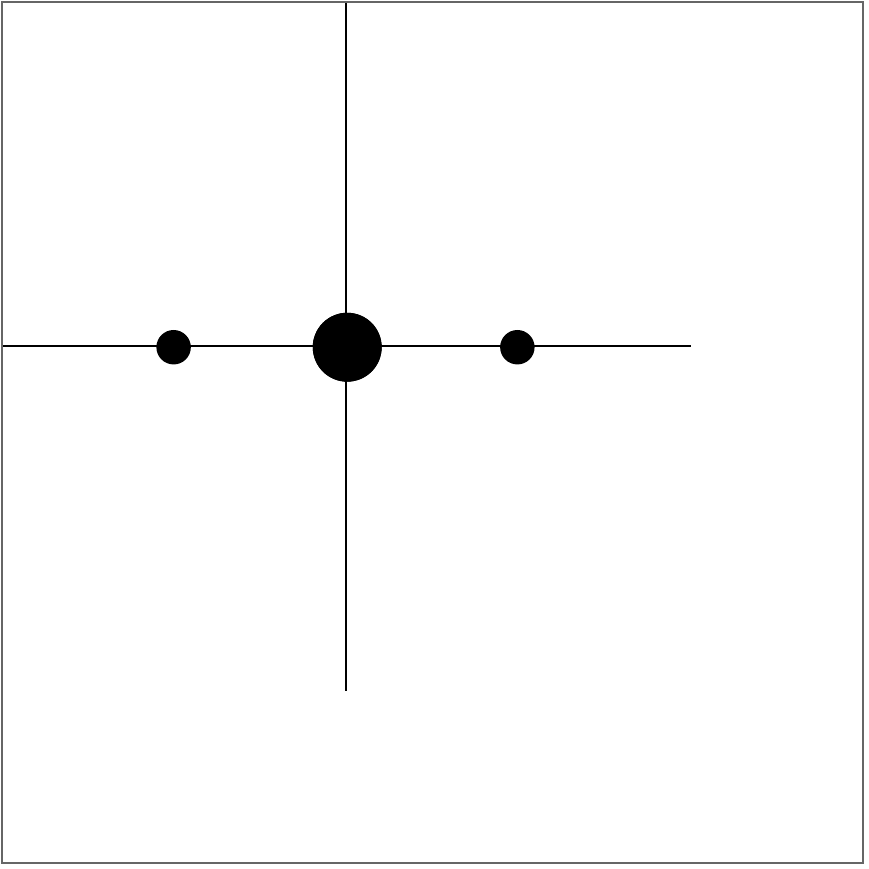}};
 \node at (8,0) {\includegraphics[width=86pt]{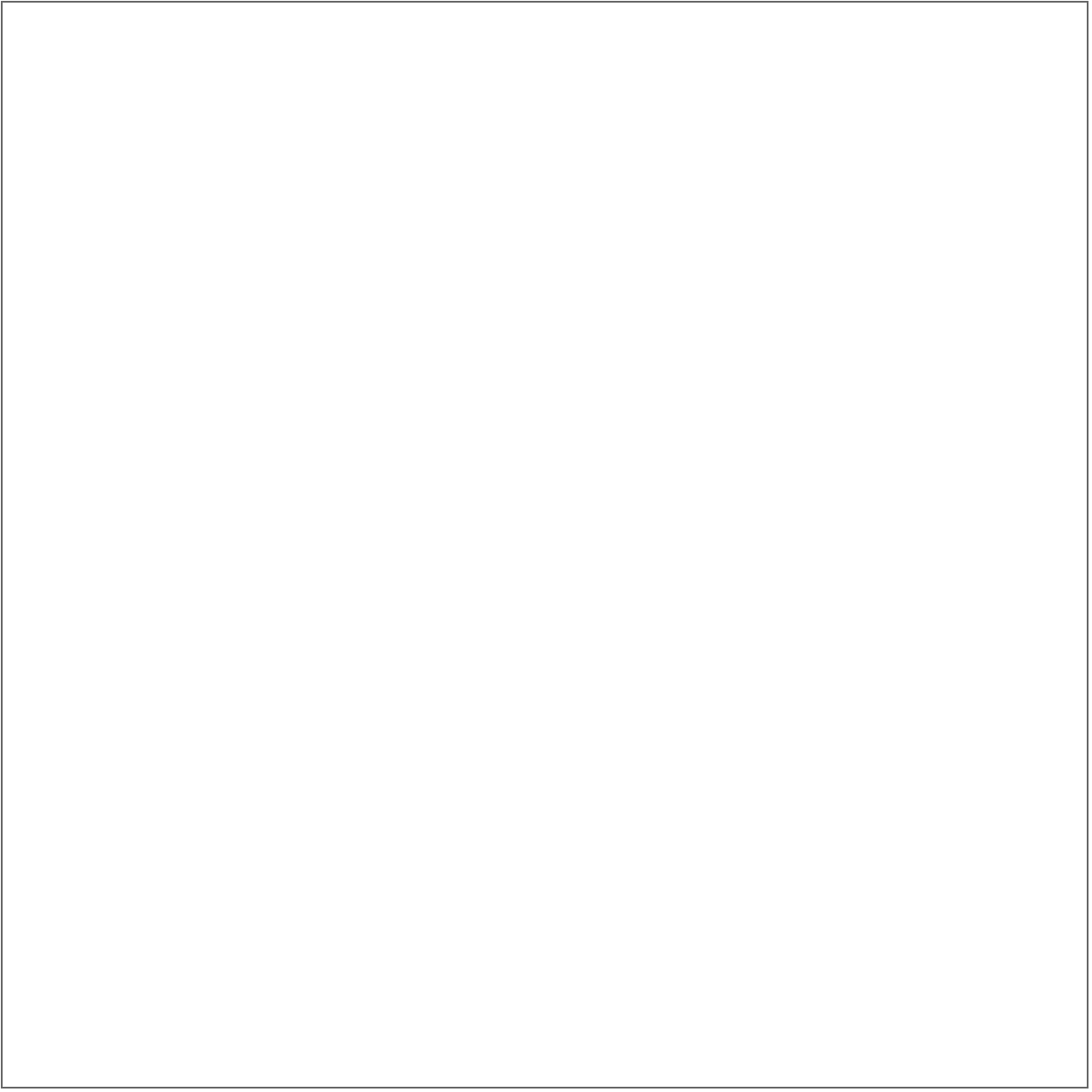}};
 \node at (4,12) {\includegraphics[width=86pt]{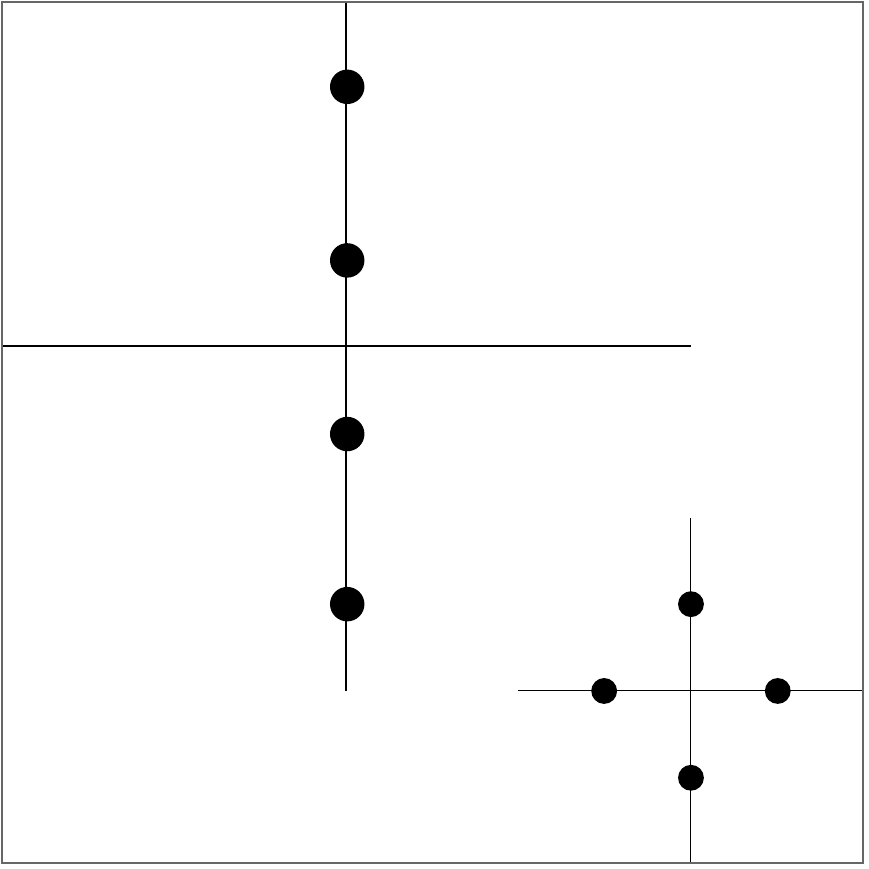}};
 \node at (8,12) {\includegraphics[width=86pt]{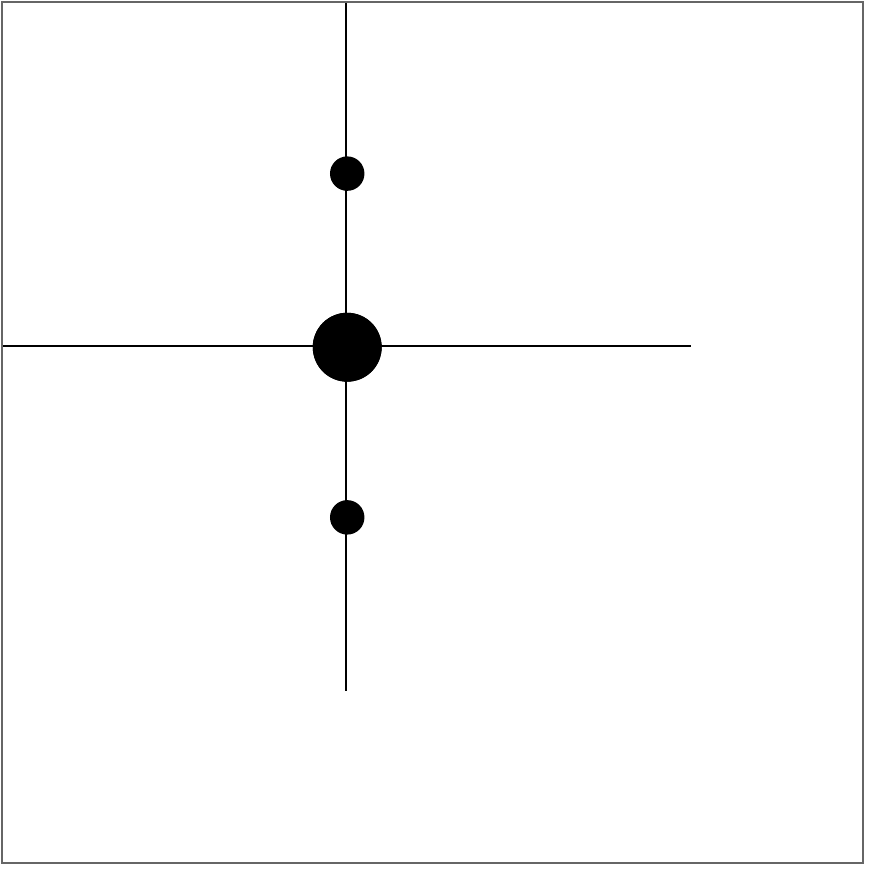}};
 \node at (8,0) {No equilibria};
\end{tikzpicture}
\caption{The versal unfolding of the Hamiltonian \eqref{eq:nilpot_Ham_general}.}
\label{fig:hopf-fold-real}
\end{figure}

Since we are interested only in the local behaviour, we will truncate the Hamiltonian to order 3.
Then the equations of motion become
\begin{equation*}
\begin{split}
\dot{q}_1&=\frac{3 a_3 q_1^2}{2}-q_2, \\
\dot{q}_2&=p_2+\frac{3 \nu q_1}{4}+\frac{3 a_2 q_1^2}{4}+\frac{3 a_3 q_1 q_2}{2}, \\
\dot{p}_1&=-\kappa-\frac{3 \nu p_2}{4}-\mu q_1-3 a_3 p_1 q_1-\frac{3 a_2 p_2 q_1}{2}-\frac{a_1 q_1^2}{2}-\frac{3 a_3 p_2 q_2}{2}-\frac{a_2 q_2^2}{2}, \\
\dot{p}_2&=p_1-\frac{3 a_3 p_2 q_1}{2}-\nu q_2-a_2 q_1 q_2-2 a_3 q_2^2.
\end{split}
\end{equation*}
If $(q_{1}^*,q_{2}^*,p_{1}^*,p_{2}^*)$ is an equilibrium of the above equations, it holds that
\label{eq:nilpot_ham_sys}
\begin{equation}
\begin{split}
&q_{1}^*=q_0, \\
&q_{2}^*=\frac{3 }{2}a_3 q_0^2, \\
&p_{1}^*=\frac{3}{8}( a_3 \nu q_0^2 + a_2 a_3 q_0^3 + 3 a_3^3 q_0^4), \\
&p_{2}^*=-\frac{3 }{4}(\nu q_0+a_2 q_0^2+3 a_3^2 q_0^3),
\end{split}
\label{eq:general-equilibrium}
\end{equation}
with $q_0$ satisfying
\begin{equation}
\frac{27}{16} a_3^4 q_0^5+\frac{45}{16} a_2 a_3^2 q_0^4+\frac{9}{4} a_3^2 \nu q_0^3+\frac{9}{8} a_2^2 q_0^3+\frac{27}{16} a_2 \nu q_0^2- \frac{1}{2}a_1 q_0^2 +\frac{9}{16} \nu^2 q_0-\mu q_0-\kappa=0.
\label{eq:equilibrium-surface}
\end{equation}

\subsection{Reparametrization}

In order to obtain the equilibria of the Hamiltonian system, one has to solve the 5th order polynomial \eqref{eq:equilibrium-surface}.
However, there exists no algebraic formula for its roots.
Thus, we proceed by changing the parameters of the system.

First we observe that equation \eqref{eq:equilibrium-surface} defines a smooth hyper-surface, which will be called \textit{surface of equilibria}, in the 4-dimensional space spanned by $(\kappa,\mu,\nu,q_0)$.
Since there exists a global coordinate chart $(\mu,\nu,q_0)$, we view the above equation as the definition of a function
\begin{equation}
\begin{split}
 \kappa(\mu,\nu,q_0)&=\frac{27}{16} a_3^4 q_0^5+\frac{45}{16} a_2 a_3^2 q_0^4+\frac{9}{4} a_3^2 \nu q_0^3+\frac{9}{8} a_2^2 q_0^3 \\
 & \hphantom{=} +\frac{27}{16} a_2 \nu q_0^2- \frac{1}{2}a_1 q_0^2 +\frac{9}{16} \nu^2 q_0-\mu q_0.
\end{split}
\label{eq:kappa-equals}
\end{equation}
By replacing $\kappa$ with the right hand side of equation \eqref{eq:kappa-equals}, equations \eqref{eq:general-equilibrium} still define an equilibrium.
However now $q_0$ is not viewed as a value that needs to be computed, but rather as a parameter on its own.
So we shift the parametrization of the system from $(\kappa,\mu,\nu)$ to $(\mu,\nu,q_0)$ and we restrict our analysis in a neighbourhood of the origin in the space spanned by $(\mu,\nu,q_0)$.

One possible problem with this method is that $q_0$ is not guaranteed to be small when $\kappa$, $\mu$ and $\nu$ are small.
Setting $\kappa=\mu=\nu=0$ gives
\begin{equation*}
q_0^2\Bigg(\frac{27}{16} a_3^4 q_0^3 + \frac{45}{16} a_2 a_3^2 q_0^2 + \frac{9}{8} a_2^2 q_0 - \frac{1}{2}a_1 \Bigg)=0 .
\end{equation*}
We see that $q_0=0$ is a double root.
This implies that in any neighbourhood of the origin in $(\mu,\nu,\kappa)$ there are always at least 2 possible values for $q_0$.
So we are indeed able to restrict our analysis in a region where $q_0$ is small.

Notice that each triplet $(\mu,\nu,q_0)$ defines exactly one equilibrium and each equilibrium can be described by one such triplet.
Also each such triplet defines uniquely the Hamiltonian $H_{\kappa,\mu,\nu}$.
So there is a bijection between the set of the Hamiltonian systems in this family paired with one of its equilibrium points and the points on the surface, $\{H_{\kappa,\mu,\nu},q_0\}\leftrightarrow(\kappa,\mu,\nu,q_0)$. 
The eigenvalues of this equilibrium are 
\begin{equation*}
\pm \frac{\sqrt{2}}{4} \sqrt{Q(\mu,\nu,q_0)\pm\sqrt{P(\mu,\nu,q_0)}},
\end{equation*}
with
\begin{align*}
Q(\mu,\nu,q_0) = & 3 a_3^2 q_0^2-16 a_2 q_0-10 \nu \\
P(\mu,\nu,q_0) = & -531 a_3^4 q_0^4-816 a_2 a_3^2 q_0^3-492 a_3^2 \nu q_0^2+40 a_2^2 q_0^2 \\
& +104 a_2 \nu q_0+64 \nu^2+64 a_1 q_0+64 \mu. \\
\end{align*}
Using the eigenvalues, one may search for Hamiltonian-Hopf and centre-saddle bifurcations.

\subsection{Hamiltonian-Hopf bifurcation}

If $Q(\mu,\nu,q_0)$ is negative and $P(\mu,\nu,q_0)$ changes sign, then the eigenvalues change in the same way as the eigenvalues of a system undergoing a Hamiltonian-Hopf bifurcation. So the next step is to seek whether the bifurcation actually takes place. 

Let $Q(\mu,\nu,q_0)=3 a_3^2 q_0^2-16 a_2 q_0-10 \nu=-\omega^2$, with $\omega>0$. By taking $P(\mu,\nu,q_0)=0$ and solving it with respect to $\mu$, it yields
\begin{equation}
\begin{split}
\mu_h =-a_1 q_0-\nu^2-\frac{13}{8} a_2 \nu q_0 - \frac{5}{8} a_2^2 q_0^2 +\frac{123}{16} a_3^2 \nu q_0^2+\frac{51}{4} a_2 a_3^2 q_0^3+\frac{531}{64} a_3^4 q_0^4.
\end{split}
\label{ch2:eq:hopf-subst}
\end{equation}
By substituting $\mu$ by the above, one gets equations of motion with a non-semi-simple linear part and eigenvalues $\pm\ii\omega(2\sqrt{2})^{-1}$.

Let $J'$ be the Jacobian matrix at the equilibrium and let
\begin{equation*}
J= \left( \begin{array}{cccc}
         0 & \frac{\omega}{2\sqrt{2}} & 0 & 0 \\
         -\frac{\omega}{2\sqrt{2}} & 0 & 0 & 0 \\
         -1 & 0 & 0 & \frac{\omega}{2\sqrt{2}} \\
         0 & -1 & -\frac{\omega}{2\sqrt{2}} & 0
         \end{array}\right).
\end{equation*}
Then the system $P\, J=J'\, P$ and $P^\intercal\, \Omega\, P=\Omega$ can be solved\footnote{Recall that $\Omega$ is the $2m \times 2m$ matrix $\left( \begin{smallmatrix} 0&E\\ -E&0 \end{smallmatrix} \right)$.}. One solution is the matrix
\begin{equation*}
P^\intercal=\left( \begin{smallmatrix}
         0 & \frac{1}{ \omega} & -\frac{3 \left(4 a_2 q_0-87 a_3^2 q_0^2+6 \omega^2\right)}{40\omega } & -\frac{9 a_3 q_0}{2 \omega} \\
         \frac{ 2\sqrt{2}}{ \omega^2} & \frac{6 \sqrt{2} a_3 q_0 }{ \omega^2} & -\frac{3 a_3 q_0 \left(a_2 q_0-63 a_3^2 q_0^2-16 \omega^2\right)}{10 \sqrt{2}\omega^2} & -\frac{3 \left(4 a_2 q_0+93 a_3^2 q_0^2+6 \omega^2\right)}{10 \sqrt{2}\omega^2} \\
         \frac{2}{ \omega} & \frac{6 a_3 q_0 }{ \omega} & -\frac{3a_3 q_0\left(a_2 q_0-63 a_3^2 q_0^2+4 \omega^2\right)}{20\omega }  & -\frac{12 a_2 q_0+279 a_3^2 q_0^2-2 \omega^2}{20\omega} \\
         0 & \frac{1}{\sqrt{2}} & \frac{-12 a_2 q_0+261 a_3^2 q_0^2+2 \omega^2}{40 \sqrt{2}} & -\frac{9 a_3 q_0}{2 \sqrt{2}}
         \end{smallmatrix}\right).
\end{equation*}
Shifting the axes so that the equilibrium is always at zero and using the above transformation, the linear part of the Hamiltonian can be transformed to $$\frac{ \omega}{2 \sqrt{2}}(p_2 q_1-p_1 q_2)+\frac{1}{2} \left(q_1^2+q_2^2\right).$$
Now instead of using equation \eqref{ch2:eq:hopf-subst}, we use
\begin{equation*}
\begin{split}
\mu=\beta+\mu_h,
\end{split}
\end{equation*}
where the unfolding parameter $\beta$ is added.
With the same transformation as above the linear part of Hamiltonian becomes $$\frac{ \omega}{2 \sqrt{2}}(p_2 q_1-p_1 q_2)+\frac{1}{2} \left(q_1^2+q_2^2\right)+\frac{2 \beta }{\omega^4}(2 q_2^2+2 \sqrt{2} \omega p_1 q_2+ \omega^2p_1^2 ). $$
Then the Hamiltonian is normalized with the algorithm described in \cite{vdMeer82}. The parameter $\beta$ is counted for the degree of the monomial. After the normalization, the linear part of the Hamiltonian becomes 
\begin{equation*}
\begin{split}
H_2=&\left(\frac{ \omega}{2 \sqrt{2}}-\frac{2 \sqrt{2} \beta}{\omega^3}-\frac{48 \sqrt{2} \beta^2}{\omega^7}\right) \left(p_2 q_1-p_1 q_2\right) \\
&+ \left(\frac{1}{2} +\frac{2 \beta}{\omega^4}+\frac{96 \beta^2}{\omega^8}\right) \left(q_1^2+q_2^2\right) + \left(\frac{ \beta}{\omega^2}+\frac{16 \beta^2}{\omega^6}\right) \left(p_1^2+p_2^2\right).
\end{split}
\end{equation*}
Let $C_m$ be the coefficient of $M^2$ in the normal form. It satisfies
\begin{equation}
\begin{split}
\omega^8C_m=& \frac{320}{3} a_1^2-288 a_1 a_2^2 q_0+\frac{972}{5} a_2^4 q_0^2-2808 a_1 a_2 a_3^2 q_0^2+\frac{18954}{5} a_2^3 a_3^2 q_0^3 \\
&-8064 a_1 a_3^4 q_0^3 +\frac{587331}{20} a_2^2 a_3^4 q_0^4+\frac{530712}{5} a_2 a_3^6 q_0^5+\frac{762048}{5} a_3^8 q_0^6 \\
&+\frac{16}{3} a_1 a_2 \omega^2-\frac{36}{5} a_2^3 q_0 \omega^2-344 a_1 a_3^2 q_0 \omega^2+\frac{11907}{50} a_2^2 a_3^2 q_0^2 \omega^2 \\
&+\frac{180513}{50} a_2 a_3^4 q_0^3 \omega^2+\frac{609093}{50} a_3^6 q_0^4 \omega^2-\frac{36}{5} a_2^2 \omega^4-\frac{682}{25} a_2 a_3^2 q_0 \omega^4 \\
&+\frac{25959}{100} a_3^4 q_0^2 \omega^4+\frac{1328}{75} a_3^2 \omega^6.
\end{split}
\label{eq:m-coefficient}
\end{equation}
In order to determine the type of the Hamiltonian-Hopf bifurcation the sign of $C_m$ is needed.
The polynomial \eqref{eq:m-coefficient} has the same sign as $C_m$.
However, we are interested only in what is happening around 0 in parameters.
Since $\omega^2$ is of the same order as $q_0$ and $\nu$, only the term $\frac{80}{3} a_1^2$ determines the sign of $C_m$.
This implies that a Hamiltonian-Hopf bifurcation actually happens in the system and it is of the supercritical type as long as $a_1$ does not vanish.

\subsection{Centre-saddle bifurcation}

There are two ways we can search for the centre-saddle bifurcation.
One is to look at the surfaces $Q(\mu,\nu,q_0)^2=P(\mu,\nu,q_0)$ and $Q(\mu,\nu,q_0)^2=-P(\mu,\nu,q_0)$.
Another is to search where both $\partial_{q_0}\kappa_{\mu,\nu,q_0}=0$ and $\partial^2_{q_0}\kappa_{\mu,\nu,q_0}\ne0$ hold, with $\kappa_{\mu,\nu,q_0}$ given by equation \eqref{eq:kappa-equals}.
The latter gives the relation
\begin{equation}
\mu = \frac{9}{16} \nu^2-a_1 q_0+\frac{27}{8} a_2 \nu q_0+\frac{27}{8} a_2^2 q_0^2+\frac{27}{4} a_3^2 \nu q_0^2+\frac{45}{4} a_2 a_3^2 q_0^3+\frac{135}{16} a_3^4 q_0^4,\;\;\;\;
\label{eq:fold-surface}
\end{equation}
while
\begin{equation*}
\partial^2_{q_0}\kappa_{\mu,\nu,q_0}=-a_1+\frac{27}{8} a_2 \nu+\frac{27}{4} a_2^2 q_0+\frac{27}{2} a_3^2 \nu q_0+\frac{135}{4} a_2 a_3^2 q_0^2+\frac{135}{4} a_3^4 q_0^3.
\end{equation*}
This shows that there exists a neighbourhood around zero in parameters, in which the above second derivative does not vanish as long as $a_1$ does not vanish.

Substituting $\mu$ by the relation \eqref{eq:fold-surface}, one finds that the equilibrium has eigenvalues $0$, corresponding to a Jordan block of degree 2, and $\pm \sqrt{\lambda}$ with $\lambda= -20 \nu-32 a_2 q_0+6 a_3^2 q_0^2$.
So there are two distinct possibilities for the non-zero eigenvalues.
We will see that in both cases a centre-saddle bifurcation actually happens.

\subsubsection{Hyperbolic eigenvalues}

If $\lambda>0$, the non-zero eigenvalues of the equilibrium are $\pm \omega$, where $\omega\in\mathbb{R}$ is such that $\omega^2=\lambda$.
Then the system can be transformed to one having an equilibrium with its Jacobian matrix being
\begin{equation*}
J_1=\left(
\begin{array}{cccc}
 0 & 0 & 1 & 0 \\
 0 & 0 & 0 & \omega \\
 0 & 0 & 0 & 0 \\
 0 & \omega & 0 & 0
\end{array}
\right).
\end{equation*}
Let $J_1'$ be the Jacobian matrix at the equilibrium. Then one may search for a matrix $P$ that satisfies $P_1\, J_1=J_1'\, P_1$ and $P_1^\intercal\, \Omega\, P_1=\Omega$. One such matrix is
\begin{equation*}
P_1^\intercal=\left(
\begin{smallmatrix}
 -\frac{1}{\omega} & -\frac{3 a_3 q_0}{\omega} & \frac{3 a_3 q_0 \left(a_2 q_0-63 a_3^2 q_0^2+4 \omega^2\right)}{40 \omega} & \frac{3 \left(4 a_2 q_0+93 a_3^2 q_0^2-4 \omega^2\right)}{40 \omega} \\
 0 & -\frac{1}{\sqrt{\omega}} & -\frac{3 \left(-4 a_2 q_0+87 a_3^2 q_0^2+4 \omega^2\right)}{40 \sqrt{\omega}} & \frac{9 a_3 q_0}{2 \sqrt{\omega}} \\
 0 & \frac{1}{\omega} & -\frac{12 a_2 q_0-261 a_3^2 q_0^2+28 \omega^2}{40 \omega} & -\frac{9 a_3 q_0}{2 \omega} \\
 \frac{1}{\omega^{3/2}} & \frac{3 a_3 q_0}{\omega^{3/2}} & \frac{3 a_3 q_0 \left(-a_2 q_0+63 a_3^2 q_0^2+36 \omega^2\right)}{40 \omega^{3/2}} & \frac{-12 a_2 q_0-279 a_3^2 q_0^2-28 \omega^2}{40 \omega^{3/2}}
\end{smallmatrix}
\right).
\end{equation*}
With a shift in the axes so that the equilibrium is always at zero and using the above matrix as transformation, the linear part of the Hamiltonian can be transformed to $\frac{1}{2}p_1^2+\frac{1}{2} \omega\left(p_2^2-q_2^2\right)$.
Then for $C_{q_1^3}^+$, the coefficient of $q_1^3$ in the Hamiltonian, it holds
\begin{equation*}
\omega^3 C_{q_1^3}^+=-\frac{a_1}{6}+\frac{9}{40} a_2^2 q_0+\frac{351}{160} a_2 a_3^2 q_0^2+\frac{63}{10} a_3^4 q_0^3-\frac{9}{40} a_2 \omega^2-\frac{9}{10} a_3^2 q_0 \omega^2.
\end{equation*}
Since $\omega^2$ is of the same order as $q_0$ and $\nu$, $C_{q_1^3}^+$ does not vanish as long as $a_1$ does not vanish.

\subsubsection{Elliptic eigenvalues}

If $\lambda<0$, the non-zero eigenvalues of the equilibrium are $\pm \ii\omega$, where $\omega\in\mathbb{R}$ is such that $\omega^2=-\lambda$.
Then the system can be transformed to one having equilibrium with Jacobian matrix being
\begin{equation*}
J_2=\left(
\begin{array}{cccc}
 0 & 0 & -1 & 0 \\
 0 & 0 & 0 & \omega \\
 0 & 0 & 0 & 0 \\
 0 & -\omega & 0 & 0
\end{array}
\right).
\end{equation*}
Let $J_2'$ be the Jacobian matrix at the equilibrium, then one may search for a matrix $P_2$ that satisfies $P_2\, J_2=J_2'\, P_2$ and $P_2^\intercal\, \Omega\, P_2=\Omega$. One such matrix is
\begin{equation*}
P_2^\intercal=\left(
\begin{smallmatrix}
 \frac{1}{\omega} & \frac{3 a_3 q_0}{\omega} & -\frac{3 a_2 a_3 q_0^2}{40 \omega}+\frac{189 a_3^3 q_0^3}{40 \omega}+\frac{3 a_3 q_0 \omega}{10} & -\frac{3 a_2 q_0}{10 \omega}-\frac{279 a_3^2 q_0^2}{40 \omega}-\frac{3 \omega}{10} \\
 0 & \frac{1}{\sqrt{\omega}} & -\frac{3 a_2 q_0}{10 \sqrt{\omega}}+\frac{261 a_3^2 q_0^2}{40 \sqrt{\omega}}-\frac{3 \omega^{3/2}}{10} & -\frac{9 a_3 q_0}{2 \sqrt{\omega}} \\
 0 & \frac{1}{\omega} & -\frac{3 a_2 q_0}{10 \omega}+\frac{261 a_3^2 q_0^2}{40 \omega}+\frac{7 \omega}{10} & -\frac{9 a_3 q_0}{2 \omega} \\
 \frac{1}{\omega^{3/2}} & \frac{3 a_3 q_0}{\omega^{3/2}} & -\frac{3 a_2 a_3 q_0^2}{40 \omega^{3/2}}+\frac{189 a_3^3 q_0^3}{40 \omega^{3/2}}-\frac{27}{10} a_3 q_0 \sqrt{\omega} & -\frac{3 a_2 q_0}{10 \omega^{3/2}}-\frac{279 a_3^2 q_0^2}{40 \omega^{3/2}}+\frac{7 \sqrt{\omega}}{10}
\end{smallmatrix}
\right).
\end{equation*}
With a shift in the axes so the equilibrium is always at zero and using the above matrix as transformation, the linear part of the Hamiltonian can be transformed to -$\frac{1}{2} p_1^2+\frac{1}{2}\omega\left(p_2^2+q_2^2\right) $.
Then if $C_{q_1^3}^-$ is the coefficient of $q_1^3$ in the Hamiltonian, it holds
\begin{equation*}
\omega^3 C_{q_1^3}^-=-\frac{a_1}{6}+\frac{9}{40} a_2^2 q_0+\frac{351}{160} a_2 a_3^2 q_0^2+\frac{63}{10} a_3^4 q_0^3-\frac{9}{40} a_2 \omega^2-\frac{9}{10} a_3^2 q_0 \omega^2.
\end{equation*}
Exactly as above, $C_{q_1^3}^-$ does not vanish as long as $a_1$ does not vanish.

\subsection{Nilpotent equilibrium}

We have found so far that the equilibria of the system are on the 3-dimensional surface of equilibria living in the 4-dimensional parameter space $(\kappa,\mu,\nu,q_0)$, defined by equation \eqref{eq:equilibrium-surface}.
Moreover there is a 2-dimensional surface living on the surface of equilibria, defined by equation \eqref{eq:fold-surface}, on which the system undergoes a centre-saddle or fold bifurcation, when $a_1\ne0$.
This surface will be called \textit{fold surface}
Because of the fold bifurcation, the Hamiltonian has different values on the two equilibria close to the origin.
This implies that there can be no heteroclinic connections between them.

There is also another 2-dimensional surface living on the surface of equilibria, defined by $P(\kappa,\mu,\nu,q_0)=0$, on which the system undergoes a Hamiltonian-Hopf bifurcation if the eigenvalues are not real and $a_1\ne0$, this will be called \textit{Hopf surface}.
It should be stressed here that the Hamiltonian-Hopf bifurcation does not happen on the whole Hopf surface.
As we will see in the next sections there are 2 possible transitions for a system passing through the Hopf surface.
One is the actual Hamilton-Hopf bifurcation, where all the eigenvalues are initially imaginary and they become complex.
In the other case, all the eigenvalues are real and they become complex.
In the second case the equilibrium stays unstable throughout the bifurcation, so this transition is not of particular interest to us.

It is clear by the eigenvalue configurations on the two aforementioned surfaces, that the eigenvalues of the equilibrium will vanish when the two surfaces meet.
The two surfaces are tangent along the line defined by $\nu= -\frac{8}{5} a_2 q_0+\frac{3}{10} a_3^2 q_0^2$ and they do not meet anywhere else in a neighbourhood of the origin.
On that line it can be checked that the equilibrium has a nilpotent Jacobian.

So we see that the nilpotent equilibrium happens on a line.
This is of course due to the fact that there are three parameters in our system, instead of the two that the linear unfolding requires.

\subsection{The effect of the coefficients}

Let $\kappa_{a_1,a_2,a_3}$, $f_{a_1,a_2,a_3}$ and $h_{a_1,a_2,a_3}$ denote the surface of equilibria, the fold surface and the Hopf surface, respectively.
Let $f^\kappa_{a_1,a_2,a_3}$, $h^\kappa_{a_1,a_2,a_3}$ denote the projection to $(\kappa,\mu,\nu)$-hyperplane
and $f^{q_0}_{a_1,a_2,a_3}$, $h^{q_0}_{a_1,a_2,a_3}$ denote the projection to the $(\mu,\nu,q_0)$-hyperplane. Lastly, the absence of an index implies that it is zero, for example $\kappa_{a_1,a_2}\equiv\kappa_{a_1,a_2,0}$ and $\kappa_{a_1}\equiv\kappa_{a_1,0,0}$.

\begin{lemma}
If $a_1\ne0$, the surfaces $f^\kappa_{a_1,a_2,a_3}$ and $h^\kappa_{a_1,a_2,a_3}$ are diffeomorphic to the surfaces $f^\kappa_{a_1}$ and $h^\kappa_{a_1}$, respectively.
\end{lemma}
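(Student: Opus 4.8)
The plan is to produce a single diffeomorphism of a neighbourhood of the origin in $(\kappa,\mu,\nu)$-space that carries the pair $(f^\kappa_{a_1,a_2,a_3},h^\kappa_{a_1,a_2,a_3})$ onto the model pair $(f^\kappa_{a_1},h^\kappa_{a_1})$; this yields the two asserted diffeomorphisms at once. First I would show that each projected surface is a graph over the $(\mu,\nu)$-plane. Assigning the weights $q_0\sim\epsilon^2$, $\nu\sim\epsilon$, $\mu\sim\epsilon^2$, $\kappa\sim\epsilon^4$, every monomial carrying an $a_2$ or $a_3$ in \eqref{eq:fold-surface}, \eqref{ch2:eq:hopf-subst} and \eqref{eq:kappa-equals} is of strictly higher order than the leading $a_1$-terms, so on both surfaces $\partial_{q_0}\mu=-a_1+O(\epsilon)\neq0$. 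By the implicit function theorem I may invert $\mu=\mu(\nu,q_0)$ on each surface to obtain $q_0=q_0^f(\mu,\nu)$ and $q_0=q_0^h(\mu,\nu)$, and hence smooth graphs $\kappa=F(\mu,\nu)$ and $\kappa=G(\mu,\nu)$ by substitution into \eqref{eq:kappa-equals}. For the model $a_2=a_3=0$ a direct computation gives $F_0=\frac{1}{2a_1}(\mu-\frac{9}{16}\nu^2)^2$ and $G_0=\frac{1}{2a_1}(\mu+\nu^2)^2-\frac{25}{16a_1}\nu^2(\mu+\nu^2)$. Each surface is thus individually a graph, hence diffeomorphic to a disc, so the whole substance of the lemma lies in the relative position of the two surfaces.

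That relative position is encoded by the difference $D:=F-G$, whose zero set together with $\nabla D=0$ is the tangency (nilpotent) curve $\gamma$. For the model one finds $D_0=\frac{625}{512a_1}\nu^4$, so $\gamma_0=\{\nu=0\}$ and the surfaces meet with a third-order tangency, in agreement with the main theorem. The crucial step is to prove that this persists for arbitrary $a_2,a_3$: along the nilpotent curve $\nu=-\frac{8}{5}a_2q_0+\frac{3}{10}a_3^2q_0^2$ the function $D$ and its first three transverse derivatives vanish while the fourth does not, with its sign fixed by $a_1$ alone. Granting this, I choose coordinates $(s,r)$ near the origin with $\gamma=\{r=0\}$ and apply Hadamard's lemma four times to factor $D=r^4u(s,r)$ with $u(0,0)\neq0$ and $\operatorname{sign}u=\operatorname{sign}(1/a_1)$.

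To assemble the ambient diffeomorphism I would first take a base diffeomorphism $\psi$ of the $(\mu,\nu)$-plane sending $\gamma$ to $\{\nu=0\}$ and rescaling the transverse coordinate by $u^{1/4}$, so that $D_0\circ\psi=w\cdot D$ for a positive smooth unit $w$, both sides vanishing to exactly fourth order on $\gamma$. Then I define $\Psi(\kappa,\mu,\nu)=(K(\kappa,\mu,\nu),\psi(\mu,\nu))$, where for each $(\mu,\nu)$ the fibre map $K(\cdot,\mu,\nu)$ is the affine map of the $\kappa$-line sending $G(\mu,\nu)\mapsto G_0(\psi(\mu,\nu))$ and $F(\mu,\nu)\mapsto F_0(\psi(\mu,\nu))$, namely $K=G_0\circ\psi+\frac{D_0\circ\psi}{D}\,(\kappa-G)$. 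Its slope $\frac{D_0\circ\psi}{D}=w>0$ is smooth by the factorisation, so $\Psi$ is a diffeomorphism, and by construction it carries $h^\kappa_{a_1,a_2,a_3}$ to $h^\kappa_{a_1}$ and $f^\kappa_{a_1,a_2,a_3}$ to $f^\kappa_{a_1}$.

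The main obstacle is the persistence of the exact third-order tangency for all $a_2,a_3$: although the weight count shows every $a_2,a_3$ monomial is subleading, one must still rule out any conspiracy that lowers the transverse vanishing order of $D$ below four at some point of $\gamma$, and control the sign of the fourth transverse derivative throughout. I expect this to follow from an explicit but finite computation of the four-jet of $D$ along $\gamma$, analogous to the coefficient computations already carried out for $C_m$ and $C_{q_1^3}^{\pm}$, where $a_1\neq0$ guarantees non-vanishing. An equivalent route is to scale $(a_2,a_3)\mapsto(ta_2,ta_3)$, $t\in[0,1]$, and transport the configuration by a Moser-type isotopy built from the transverse $\nu^4$ normal form; this replaces the single computation by the requirement that the tangency order not jump for any $t$, which is the same obstacle seen in families.
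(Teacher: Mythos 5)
Your first paragraph is already a complete proof of the lemma as stated, and it is essentially the paper's own argument made explicit: the paper's proof consists precisely of the two observations that the surfaces are graphs of functions (hence trivially discs) and that for $a_1\neq 0$ the projection onto $(\kappa,\mu,\nu)$ stays smooth (i.e.\ an embedding) near the origin --- which is exactly your implicit-function-theorem step $\partial_{q_0}\mu=-a_1+O(\epsilon)\neq 0$ applied to \eqref{eq:fold-surface} and \eqref{ch2:eq:hopf-subst}. Your model-case formulas are also correct and consistent with the paper's later computations in the reduced system, where the two curves $512\alpha=81\beta^4$ and $16\alpha=-17\beta^4$ differ by $\frac{625}{512}\beta^4$.

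Everything after that first paragraph proves more than the lemma claims. The lemma asserts only that each projected surface is diffeomorphic to its model counterpart, not that a single ambient diffeomorphism carries the pair to the model pair, and the paper's proof establishes nothing beyond that weaker statement. Your stronger pair statement is, admittedly, what is really needed to justify the sentence following the lemma (that setting $a_2=a_3=0$ ``does not restrict the genericity of the results''), and your Hadamard-factorisation plus fibrewise-affine construction is a sensible scheme for proving it; but, as you flag yourself, it hinges on the unproved assertion that $D=F-G$ vanishes to exactly fourth order transversally along the tangency curve for all small $a_2,a_3$, with sign controlled by $a_1$. That assertion is the one genuinely missing ingredient, and it cannot be read off from the tangency computation the paper does perform: what is easy to verify exactly is the identity $\mu_f-\mu_h=\frac{25}{16}\bigl(\nu+\frac{8}{5}a_2q_0-\frac{3}{10}a_3^2q_0^2\bigr)^2$ in the $(\nu,q_0)$-chart, i.e.\ second-order contact of the surfaces themselves in the four-dimensional space, whereas the fourth-order contact you need concerns their shadows in $(\kappa,\mu,\nu)$ and is a property of the projection, so it must be computed in that chart, exactly as you propose. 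In short: there is no gap in the proof of the lemma (your first paragraph suffices and coincides with the paper's route), but the stronger claim your construction targets remains incomplete --- in your proposal and in the paper alike.
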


Since the surfaces $\kappa_{a_1,a_2,a_3}$, $f_{a_1,a_2,a_3}$ and $h_{a_1,a_2,a_3}$ are graphs of functions they are trivially diffeomorphic to $\kappa_{a_1}$, $f_{a_1}$ and $h_{a_1}$ respectively.
Then if $a_1\ne0$, there exists a neighbourhood of the origin in which the projection onto $(\kappa,\mu,\nu)$ is smooth.
The above lemma implies that setting $a_2=a_3=0$ does not restrict the genericity of the results.

In a system where $a_1$ vanishes Hamiltonian-Hopf bifurcation of both types can appear.
Also the fold bifurcation and other bifurcations of co-dimension 2 may happen generically.

\section{Truncated Hamiltonian}
\label{ch:truncated-hamiltonian}

Using the above lemma we can set $a_2=a_3=0$ and the Hamiltonian becomes
\begin{equation}
 H = \frac{p_2^2}{2}-p_1 q_2 +\kappa q_1 +\mu \frac{ q_1^2}{2}+\nu\left( \frac{q_2^2}{2}+\frac{3}{4}p_2 q_1 \right)+a_1 \frac{ q_1^3}{6}.
\label{eq:truncated-hamiltonian}
\end{equation}
Then the equilibria satisfy
\begin{equation*}
q_{10}=q_0,\;\; q_{20}=0,\;\; p_{10}=0,\;\; p_{20}=-\frac{3 }{4}\nu q_0.
\end{equation*}
So the equation defining the surface of equilibria takes the much simpler form
\begin{equation*}
- \frac{1}{2}a_1 q_0^2 +\frac{9}{16} \nu^2 q_0-\mu q_0-\kappa=0.
\end{equation*}
The equations can be simplified further, since if $a_1$ is positive a time rescaling can transform $a_1$ to 1. If $a_1$ is negative it can be transformed to -1 and a change of sign on $q_1$ transforms it to 1.

Notice that there can only be two equilibria instead of five.
We already know that qualitative changes of the eigenvalues happen only on the two surfaces, the fold and the Hopf.
Thus for a description of the eigenvalue configurations the two surfaces need to be drawn in the parameter space.

In the forthcoming sections we assume that $a_1=1$.

\paragraph{The Hopf surface} \hspace{0em}

\begin{figure}[p]
  \subfloat[The Hopf surface projected on the $(\mu,\nu,q_0)$-hyperplane.\label{fig:hopf-surf-nat}]{%
  \begin{overpic}[width=0.45\textwidth]{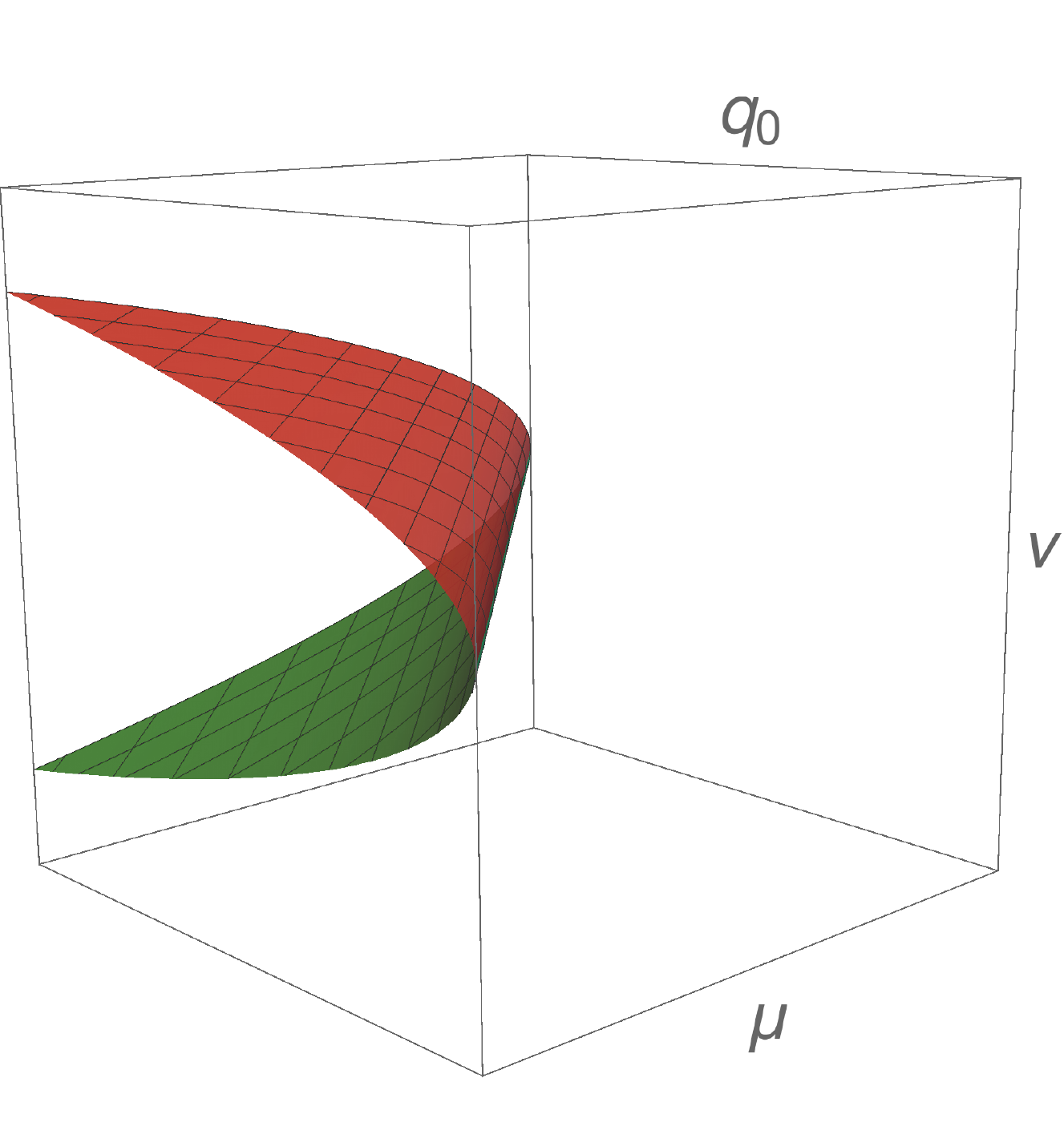}
  \end{overpic}
  }\hfill
  \subfloat[The Hopf surface projected on the $(\kappa,\mu,\nu)$-hyperplane.\label{fig:hopf-surf-real}]{%
  \begin{overpic}[width=0.45\textwidth]{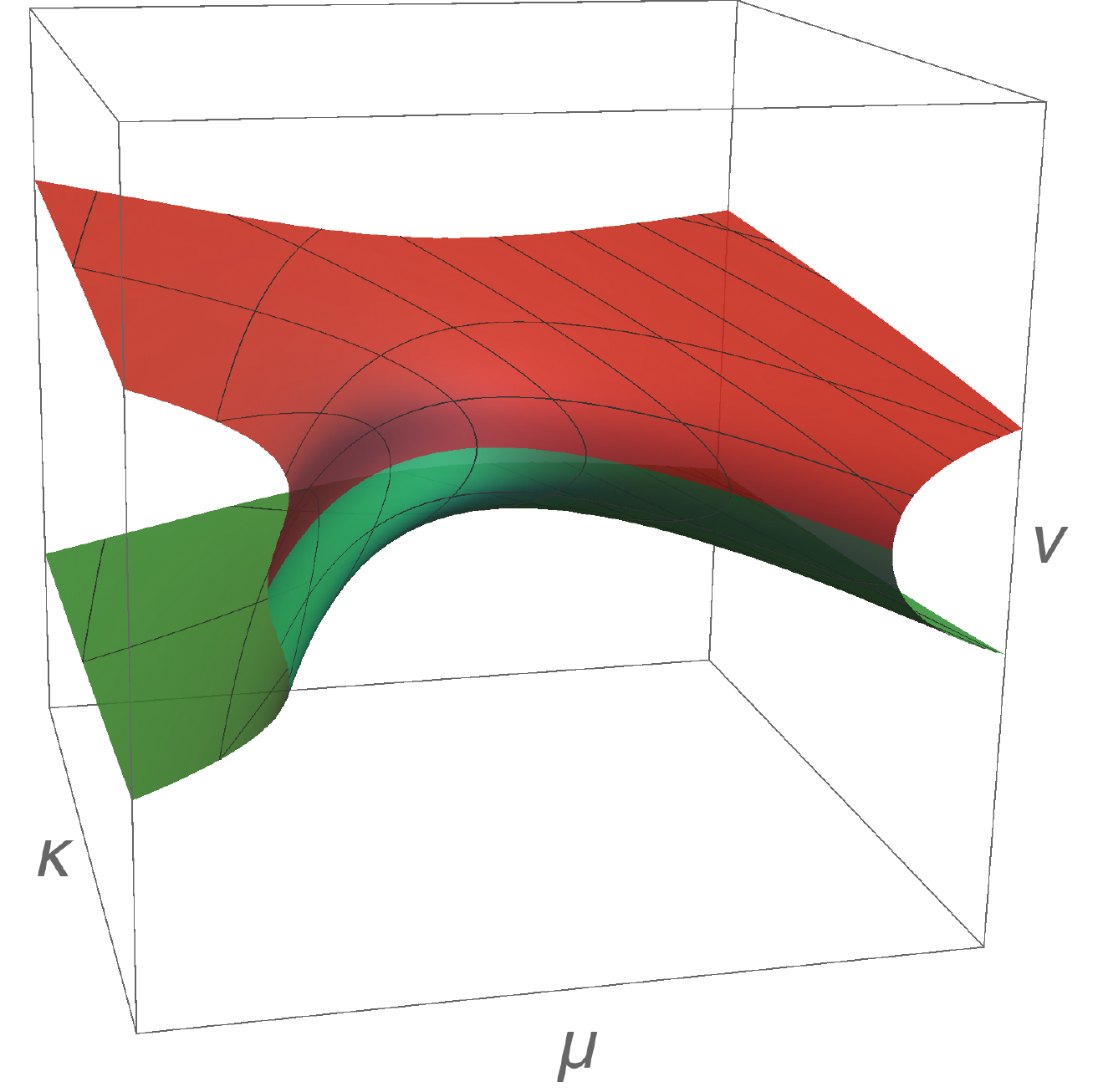}
  \end{overpic}
  }
  \caption{Projections of the Hopf surface.}\label{fig:hopf-surf}
\end{figure}

\begin{figure}[p]
  \subfloat[The fold surface projected on the $(\mu,\nu,q_0)$-hyperplane.\label{fig:fold-surf-nat}]{%
  \begin{overpic}[width=0.45\textwidth]{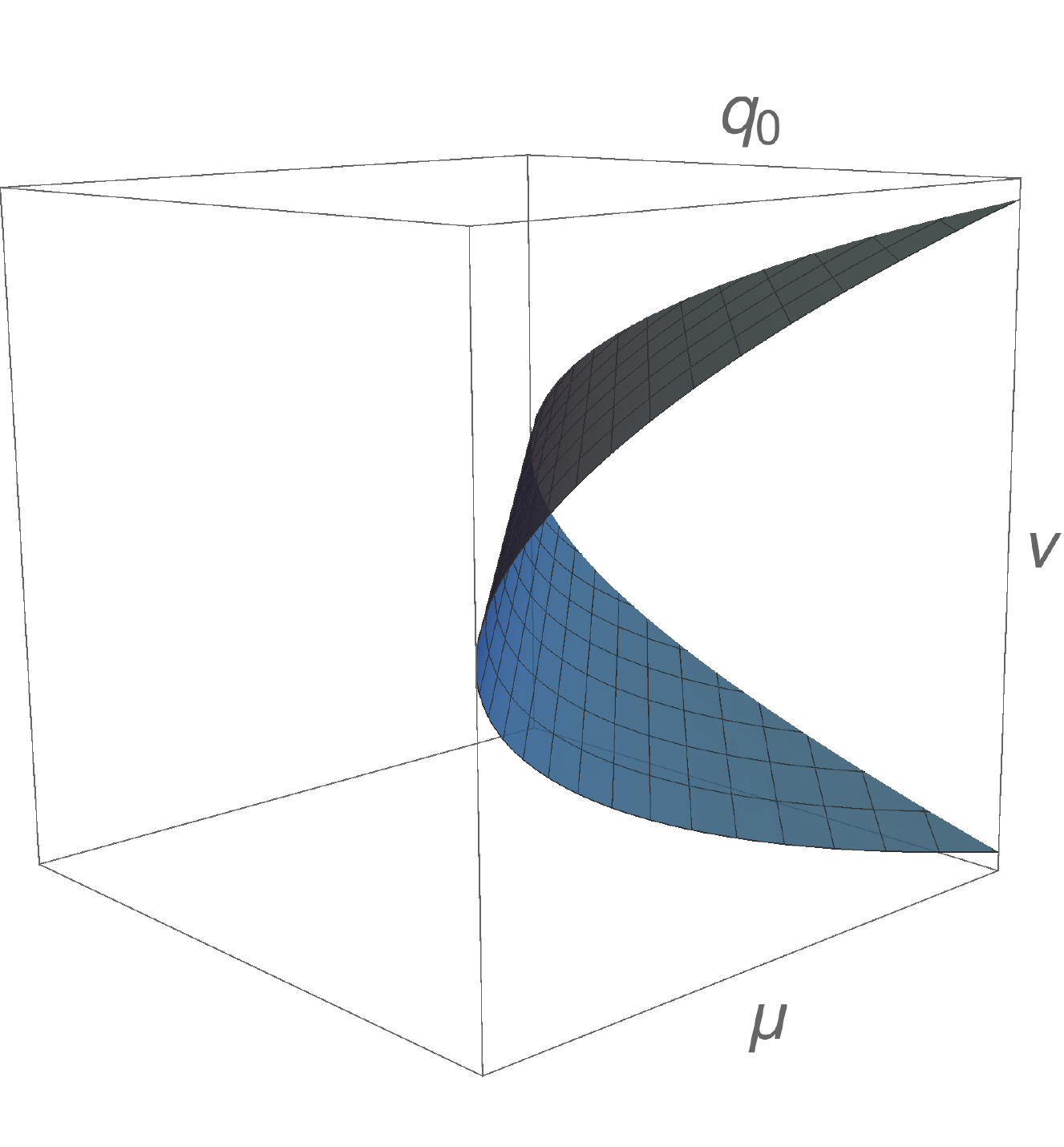}
  \end{overpic}
  }\hfill
  \subfloat[The fold surface projected on the $(\kappa,\mu,\nu)$-hyperplane.\label{fig:fold-surf-real}]{%
  \begin{overpic}[width=0.45\textwidth]{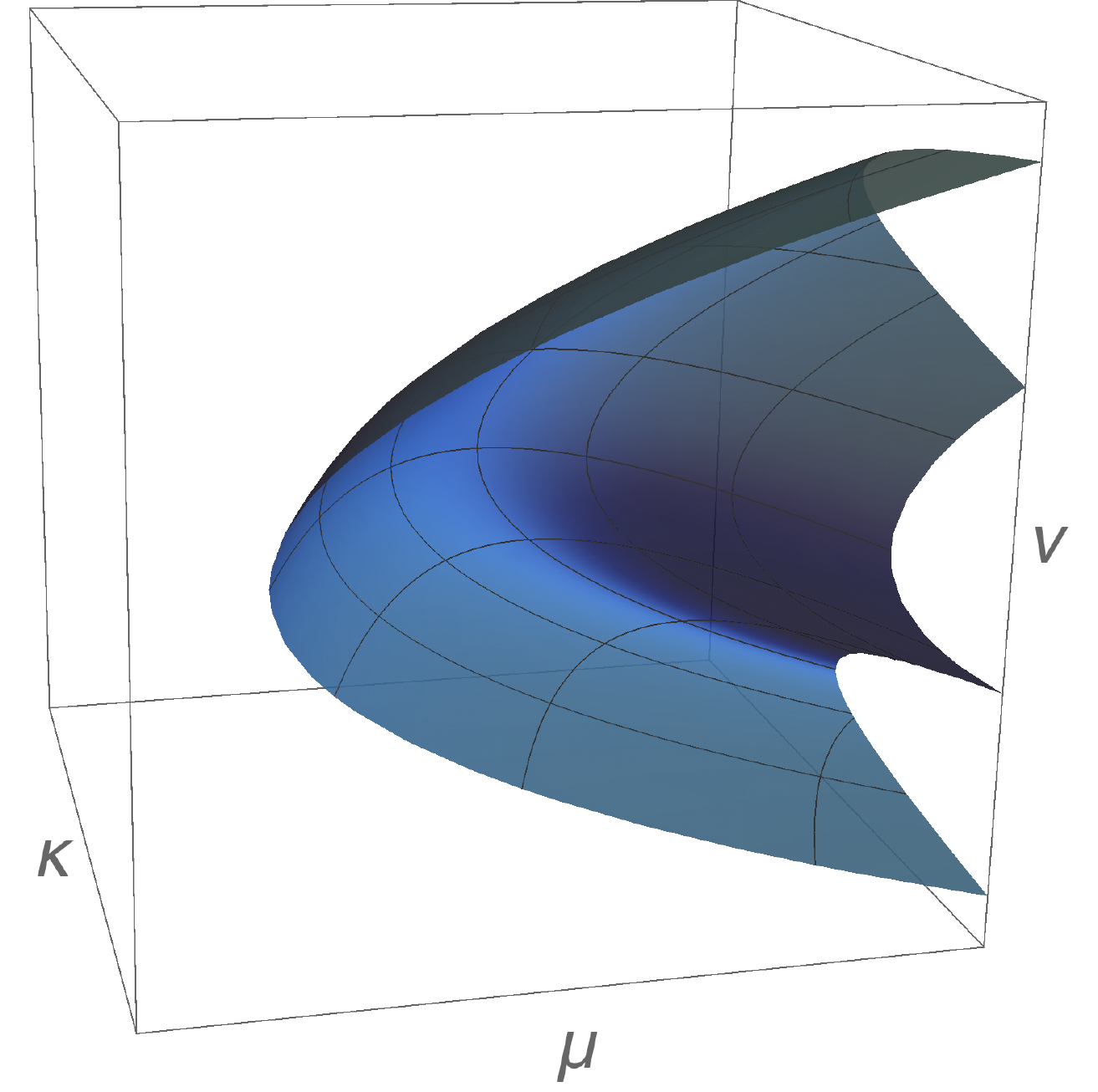}
  \end{overpic}
  }
  \caption{Projections of the fold surface.}\label{fig:fold-surf}
\end{figure}

\begin{figure}[p]
\begin{tikzpicture}
 \node at (6,6) {\includegraphics[width=220pt]{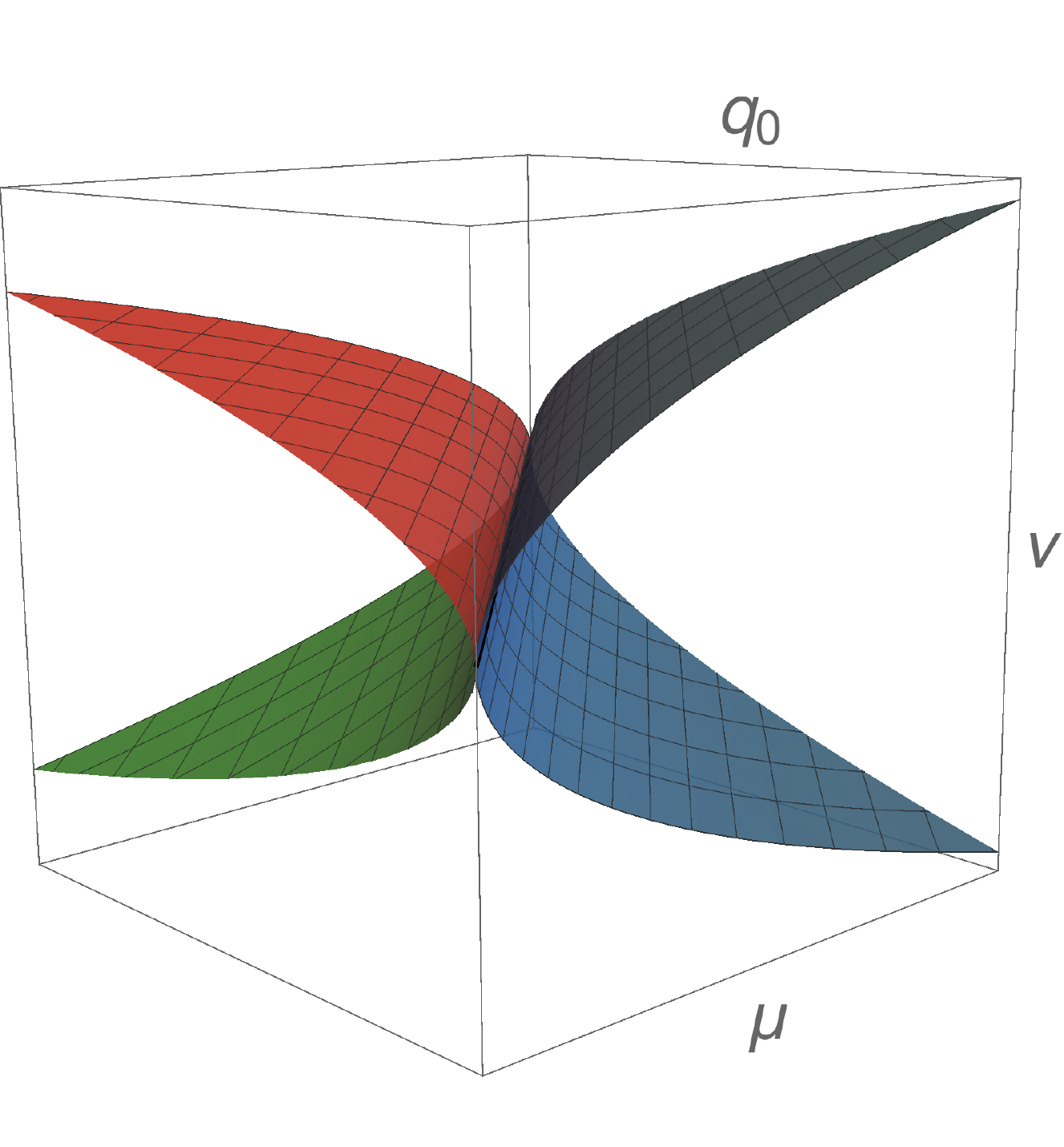}};
 \draw[->] (8.5,10.5) -- (9,8.6);
 \draw[->] (5,10.5) -- (5.8,7.8);
 \draw[->] (1.4,10.5) -- (2.3,8.04);
 \draw[->] (1.5,7) -- (3.5,6.3);
 \draw[->] (1.5,4.5) -- (2.4,4.6);
 \draw[->] (1.5,1.4) -- (5.5,4);
 \draw[->] (5.5,1.5) -- (7.2,4.13);
 \draw[->] (8.5,1.5) -- (8.4,6);
 \node at (0,12) {\includegraphics[width=85pt]{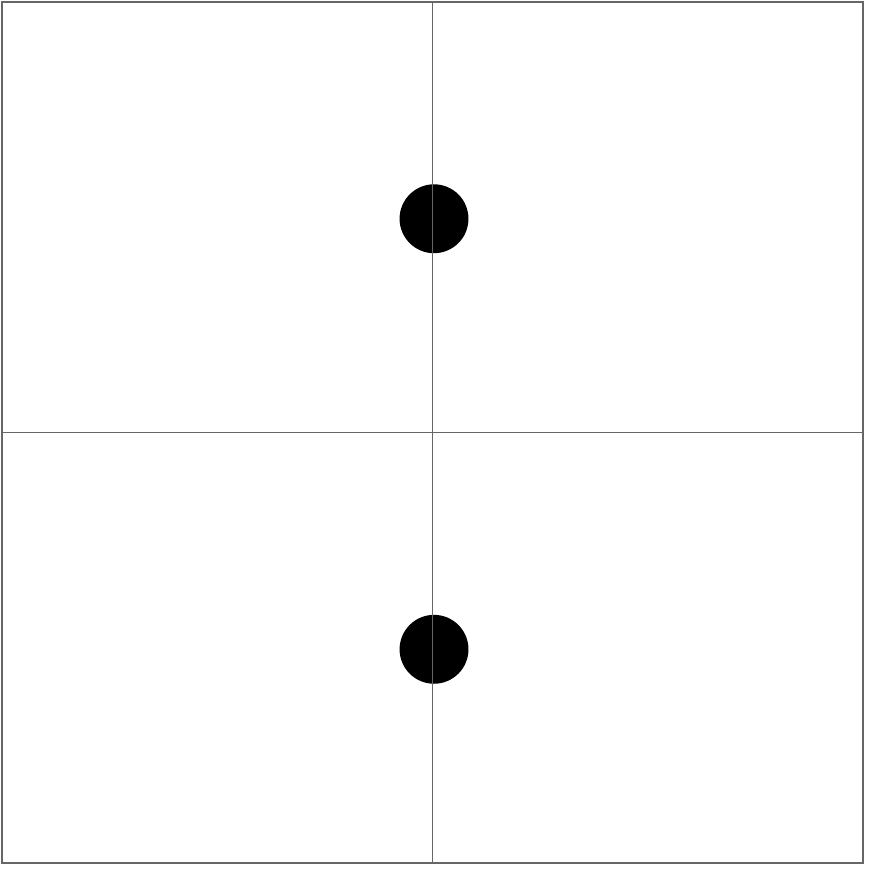}};
 \node at (0,8) {\includegraphics[width=86pt]{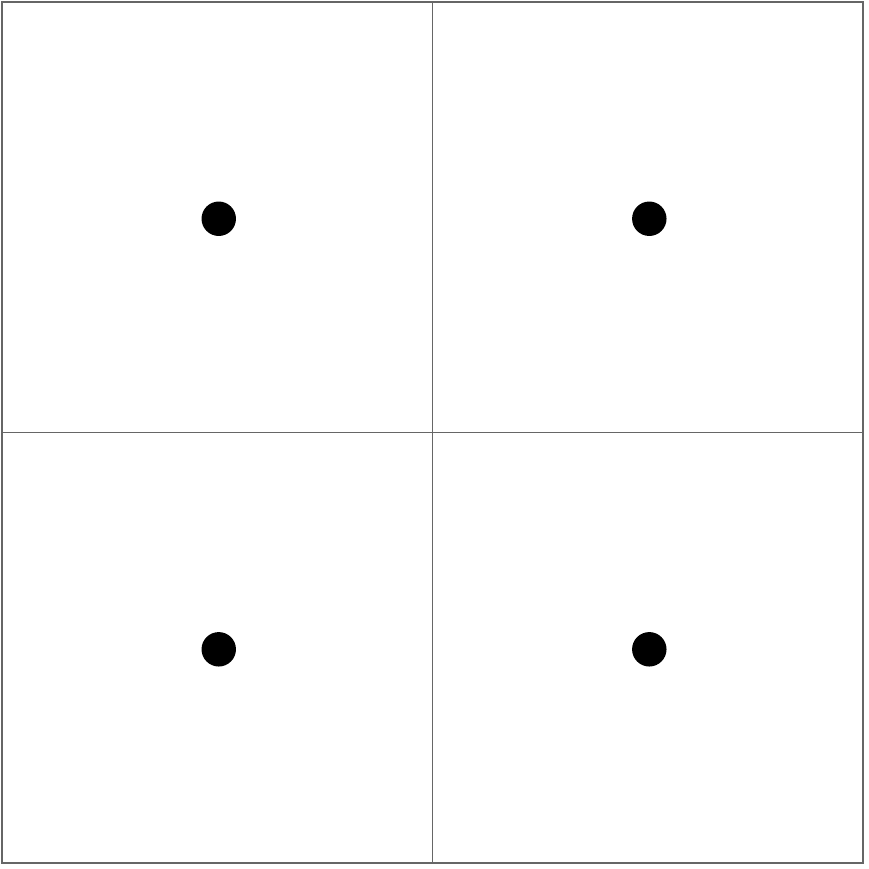}};
 \node at (0,4) {\includegraphics[width=86pt]{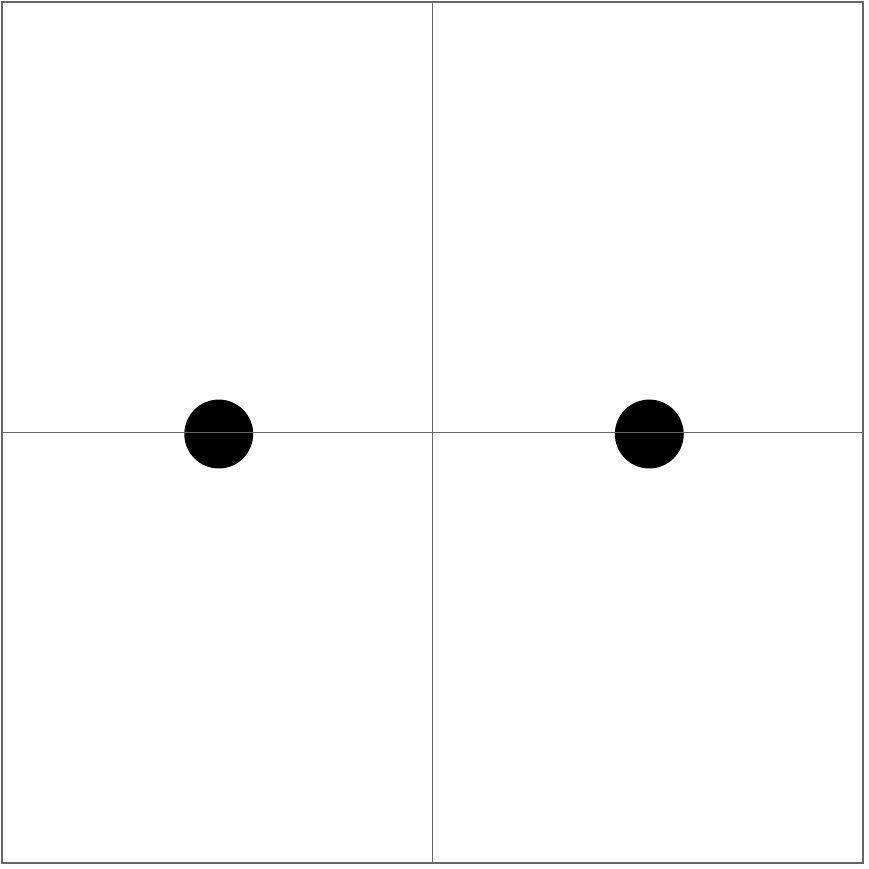}};
 \node at (0,0) {\includegraphics[width=86pt]{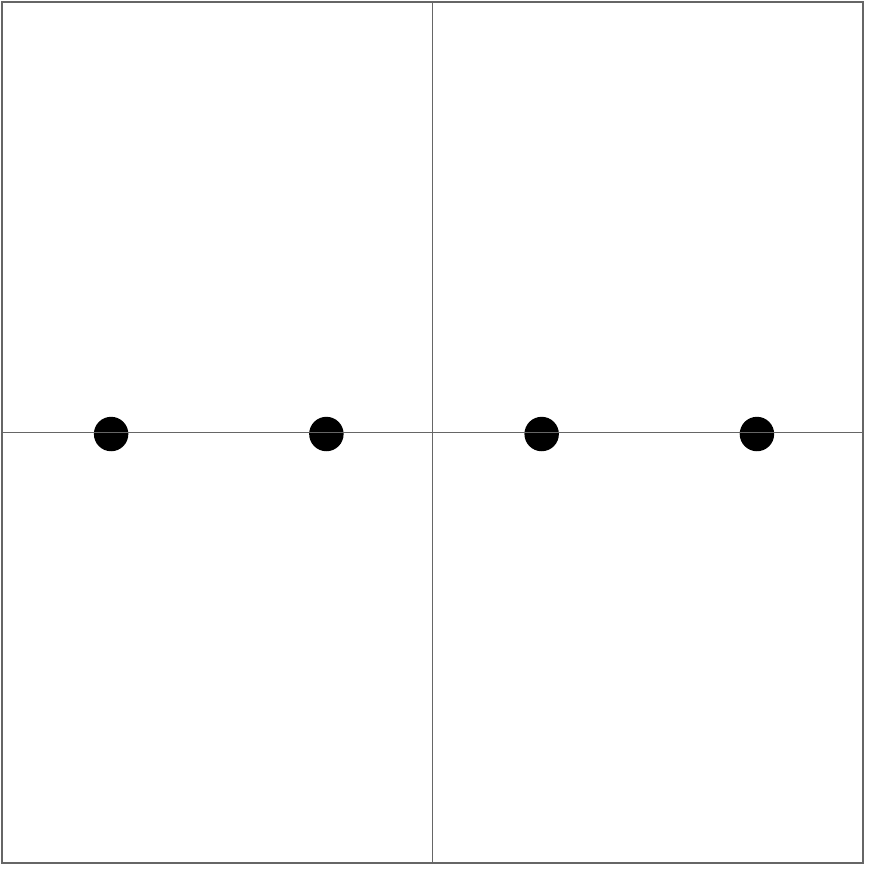}};
 \node at (4,0) {\includegraphics[width=86pt]{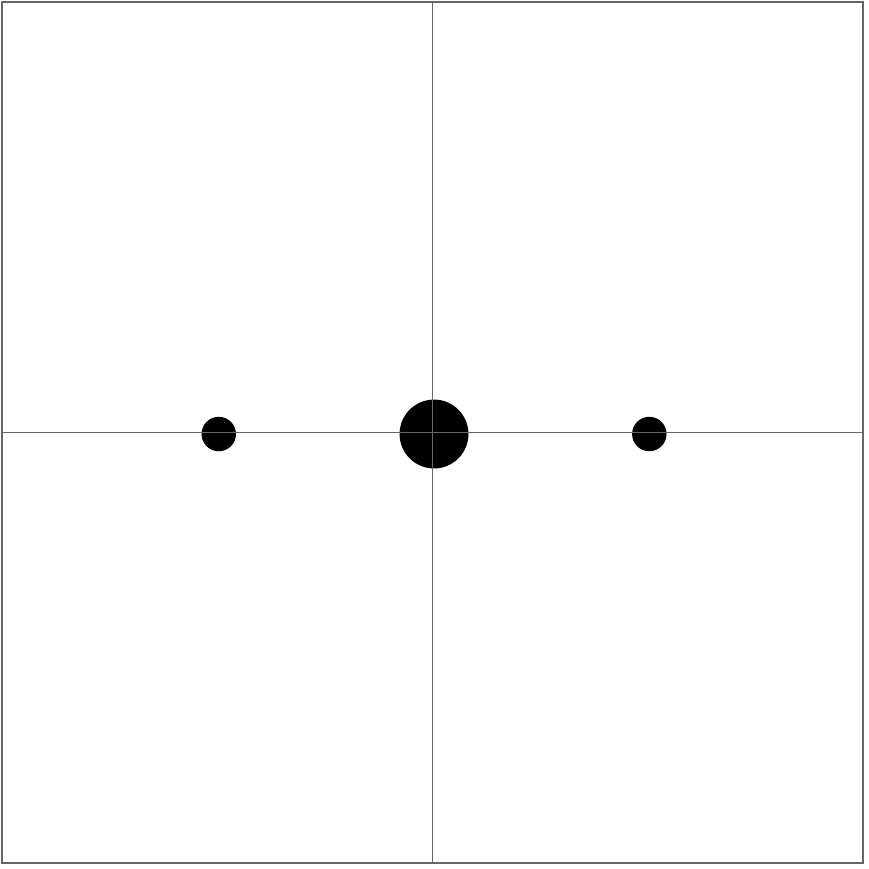}};
 \node at (8,0) {\includegraphics[width=86pt]{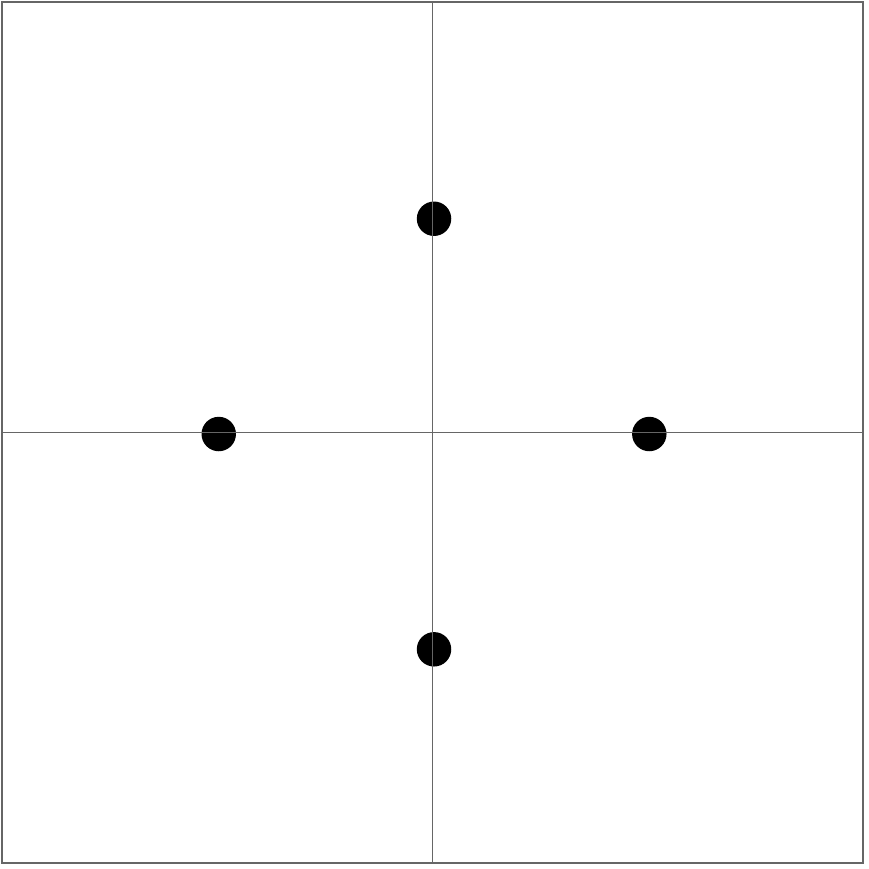}};
 \node at (4,12) {\includegraphics[width=86pt]{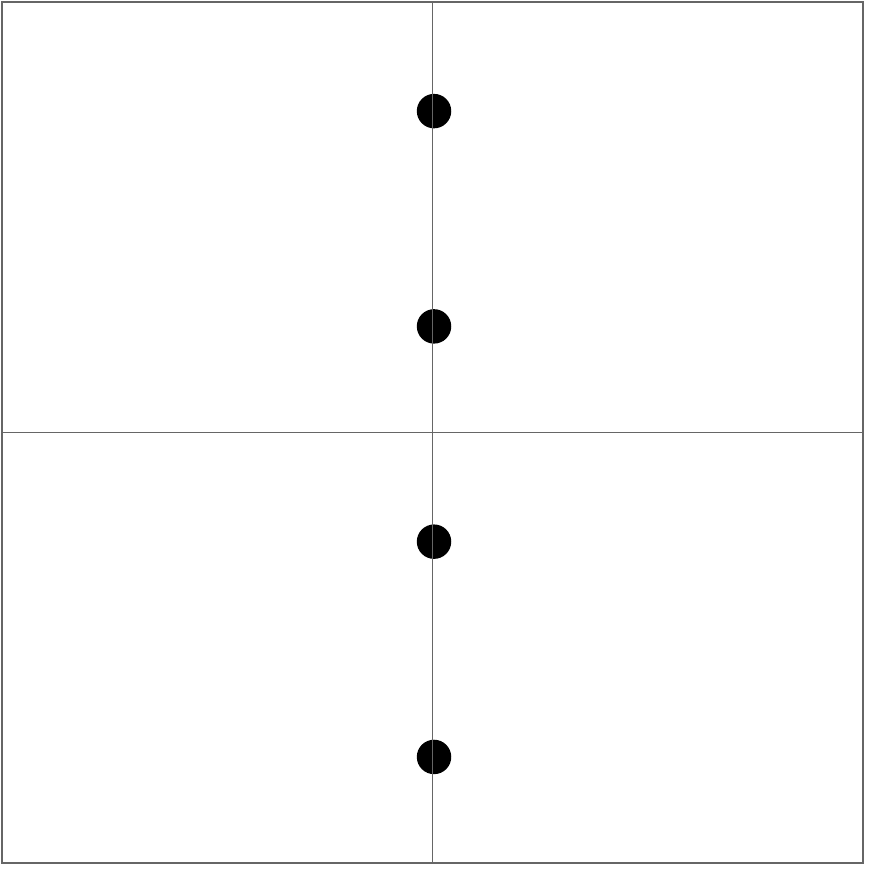}};
 \node at (8,12) {\includegraphics[width=86pt]{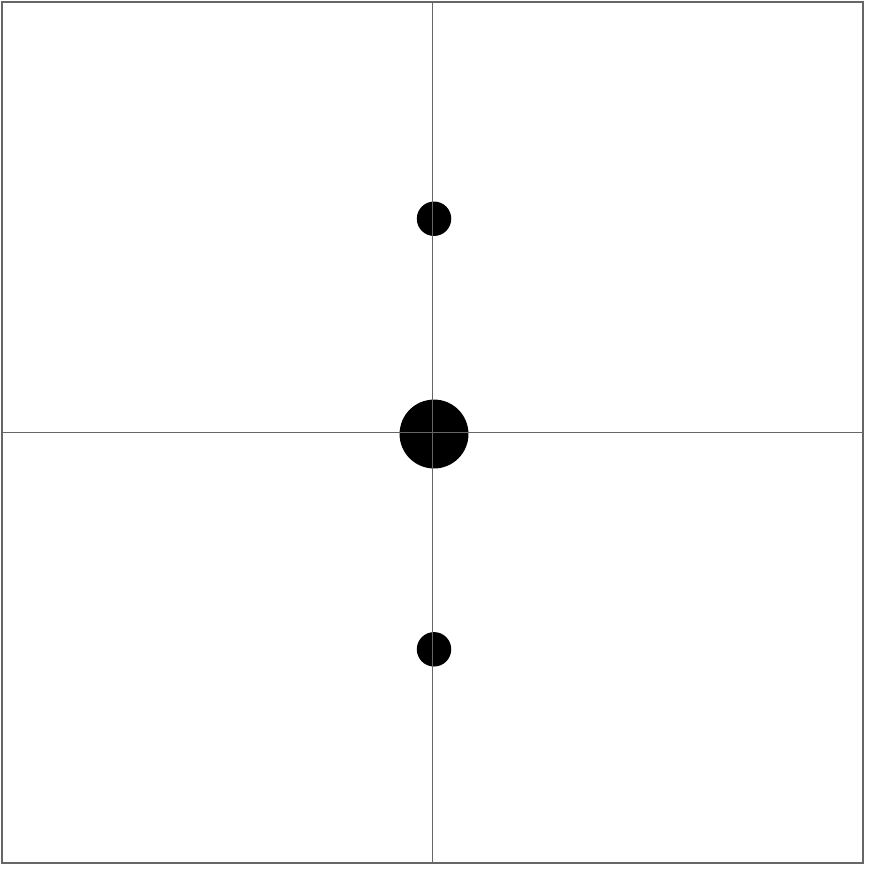}};
\end{tikzpicture}
\caption{The eigenvalue configurations on the $\mu,\nu,q_0$-hyperplane.}
\label{fig:hopf-fold-natural}
\end{figure}

The surface of equilibria is a 3-dimensional hypersurface so it cannot be drawn without a projection.
The Hopf surface is a 2-dimensional surface living on the surface of equilibria, which can be written in parametric form as
$$\left( \frac{25}{16}  \nu^2 q_0 -\frac{1}{2}q_0^2,\,-\nu^2-q_0,\,\nu,\,q_0\right) .$$
Its projection on the $(\mu,\nu,q_0)$-hyperplane is shown in Figure \ref{fig:fold-surf-nat} and its projection on the $(\kappa,\mu,\nu)$-hyperplane is shown in Figure \ref{fig:fold-surf-real}.

\paragraph{The fold surface} \hspace{0em}

Th fold surface can be written in parametric form as
$$\left( \frac{1}{2}q_0^2,\,\frac{9}{16} \nu^2-q_0,\,\nu,\,q_0 \right) .$$
Its projection on the $(\mu,\nu,q_0)$-hyperplane is shown in Figure \ref{fig:hopf-surf-nat} and its projection on the $(\kappa,\mu,\nu)$-hyperplane is shown in Figure \ref{fig:hopf-surf-real}.

\paragraph{The eigenvalue configuration} \hspace{0em}

Together the two surfaces reveal the eigenvalue configurations of the system. 
In Figure \ref{fig:hopf-fold-natural} their projection to the $(\mu,\nu,q_0)$-hyperplane is shown.
In this projection each point corresponds to exactly one equilibrium point.

The fold surface has different meanings in different projections.
Recall that each point on the $(\mu,\nu,q_0)$-hyperplane gets mapped to exactly one equilibrium.
On the other hand, in the $(\kappa,\mu,\nu)$ projection the fold surface separates the space into two regions, one where 2 equilibria exist and another where no equilibrium exists.
The eigenvalue configurations of this projection is shown in Figure \ref{fig:hopf-fold-real}.

\section{Parameter reduction}
\label{ch:param_redux}

Since $q_1^3$ is the only third order term in the Hamiltonian \eqref{eq:truncated-hamiltonian}, we can use it to cancel the term $q_1^2$.
This will reduce the parameters to two, as expected from the linear theory.

\subsection{The natural reduction}
\label{ch:natural-reduction}

Using the translation $q_1 \mapsto q_1-\mu/a_1$, $p_2 \mapsto p_2 + 3 \mu \nu/4 a_1$, time reparametri\-zation and possibly a sign change of $q_1$,  the Hamiltonian \eqref{eq:truncated-hamiltonian} gets transformed to
\begin{equation*}
  H_t = \frac{p_2^2}{2}-p_1 q_2 +\alpha q_1 + \beta \left( \frac{q_2^2}{2}+\frac{3}{4}p_2 q_1 \right)+ \frac{ q_1^3}{6},
\end{equation*}
with $\alpha=\kappa-\mu^2/2+9\mu\nu^2/16$ and $\beta=\nu$ being considered as the new parameters. The equations of motion take the form
\begin{align*}
\dot{q}_1&=-q_2, \\
\dot{q}_2&=p_2+\frac{3}{4} \beta q_1, \\
\dot{p}_1&=-\alpha-\frac{3}{4} \beta p_2-\frac{1}{2}q_1^2, \\
\dot{p}_2&=p_1-\beta q_2.
\end{align*}
An equilibrium of these equations has the form $(q_0,0,0,-\frac{3}{4} \beta q_0)$ with $q_0$ satisfying
\begin{equation}
\frac{1}{2} q_0^2-\frac{9}{16} \beta^2 q_0+\alpha=0,
\label{eq:surf-equil-param-reduct}
\end{equation}
which defines the surface of equilibria, a saddle surface in this case.

As usually, $q_0$ can be viewed as a new parameter and equation \eqref{eq:surf-equil-param-reduct} defines a smooth 2-dimensional surface in the 3-dimensional parameter space $(q_0,\alpha,\beta)$. We define a surjection from the $(q_0,\beta)$-plane to the set of equilibria by mapping the pair $(q_0,\beta)$ to the equilibrium $(q_0,0,0,-\frac{3}{4} \beta q_0)$. The eigenvalues of this equilibrium are $\pm\sqrt{-\frac{5}{4} \beta\pm\sqrt{\beta^2+q_0}}$.

\begin{figure}[p]
  \centering
  \subfloat[\label{fig:syst-example-surf-equil-1}]{\includegraphics[width=0.48\textwidth]{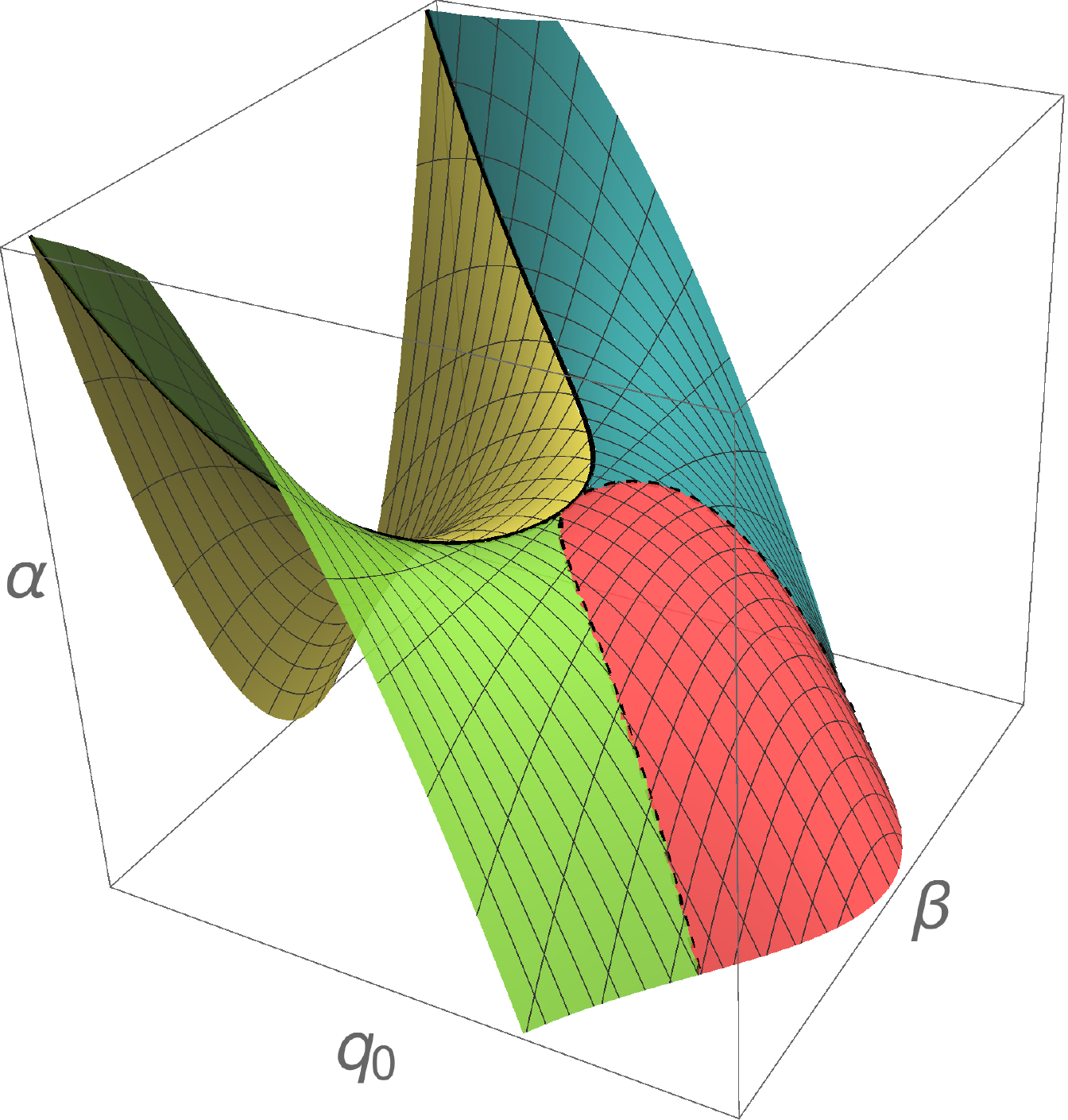}} \hspace{0.02\textwidth}               
  \subfloat[\label{fig:syst-example-surf-equil-2}]{\includegraphics[width=0.48\textwidth]{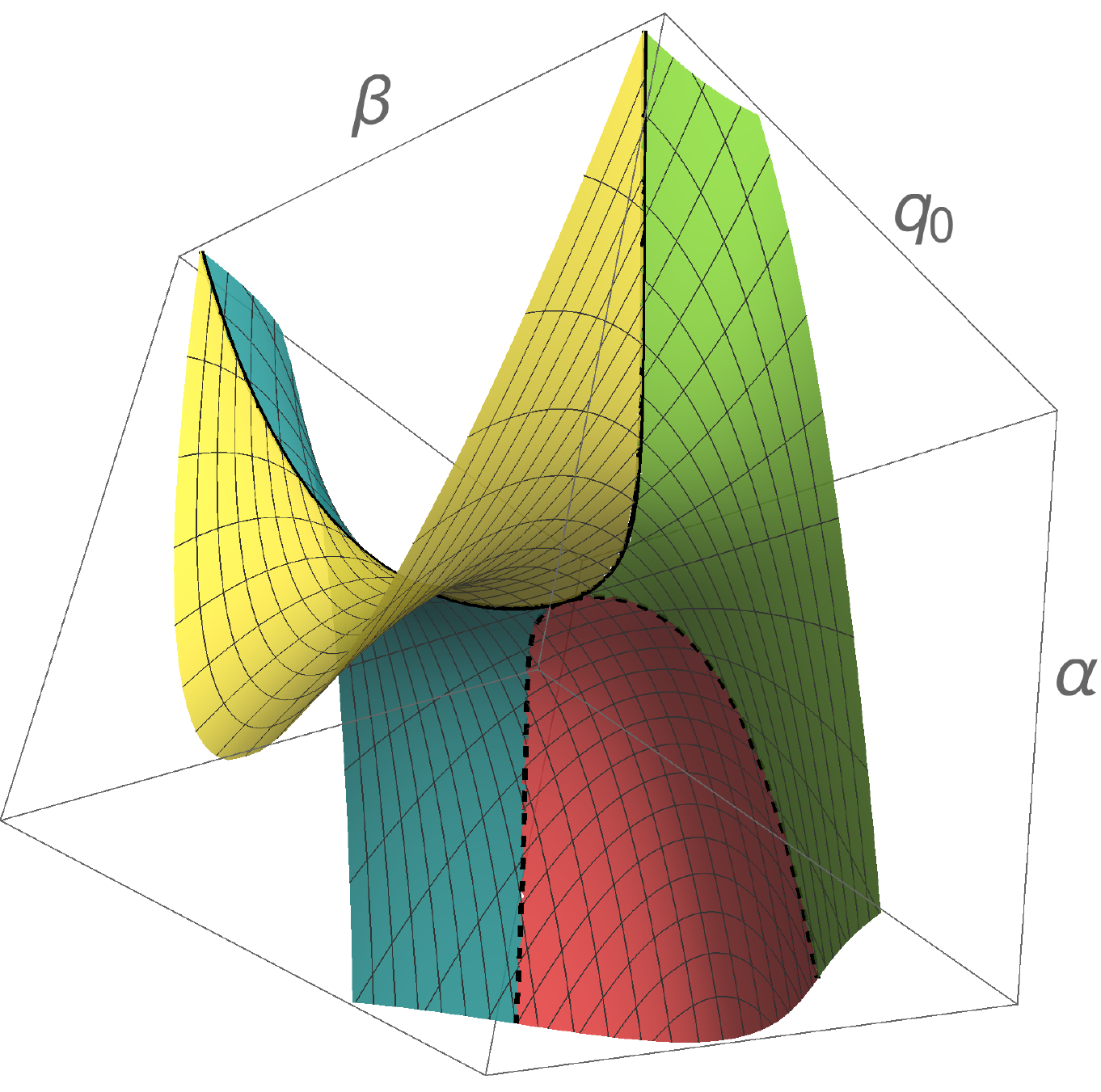}} \hspace{0.02\textwidth}
  \subfloat[\label{fig:syst-example-surf-equil-3}]{\includegraphics[width=0.48\textwidth]{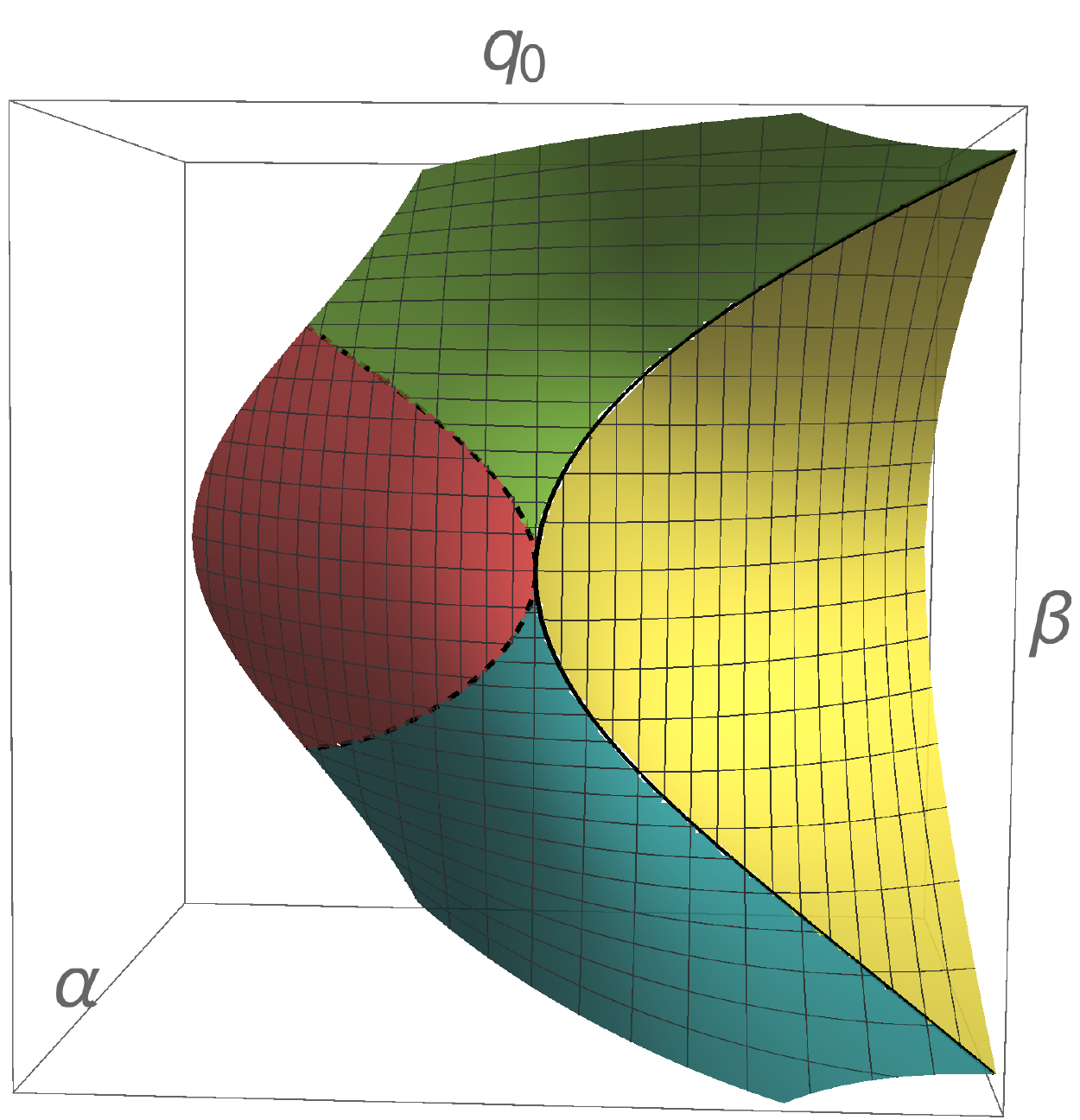}} \hspace{0.02\textwidth}
  \subfloat[\label{fig:syst-example-surf-equil-4}]{\includegraphics[width=0.48\textwidth]{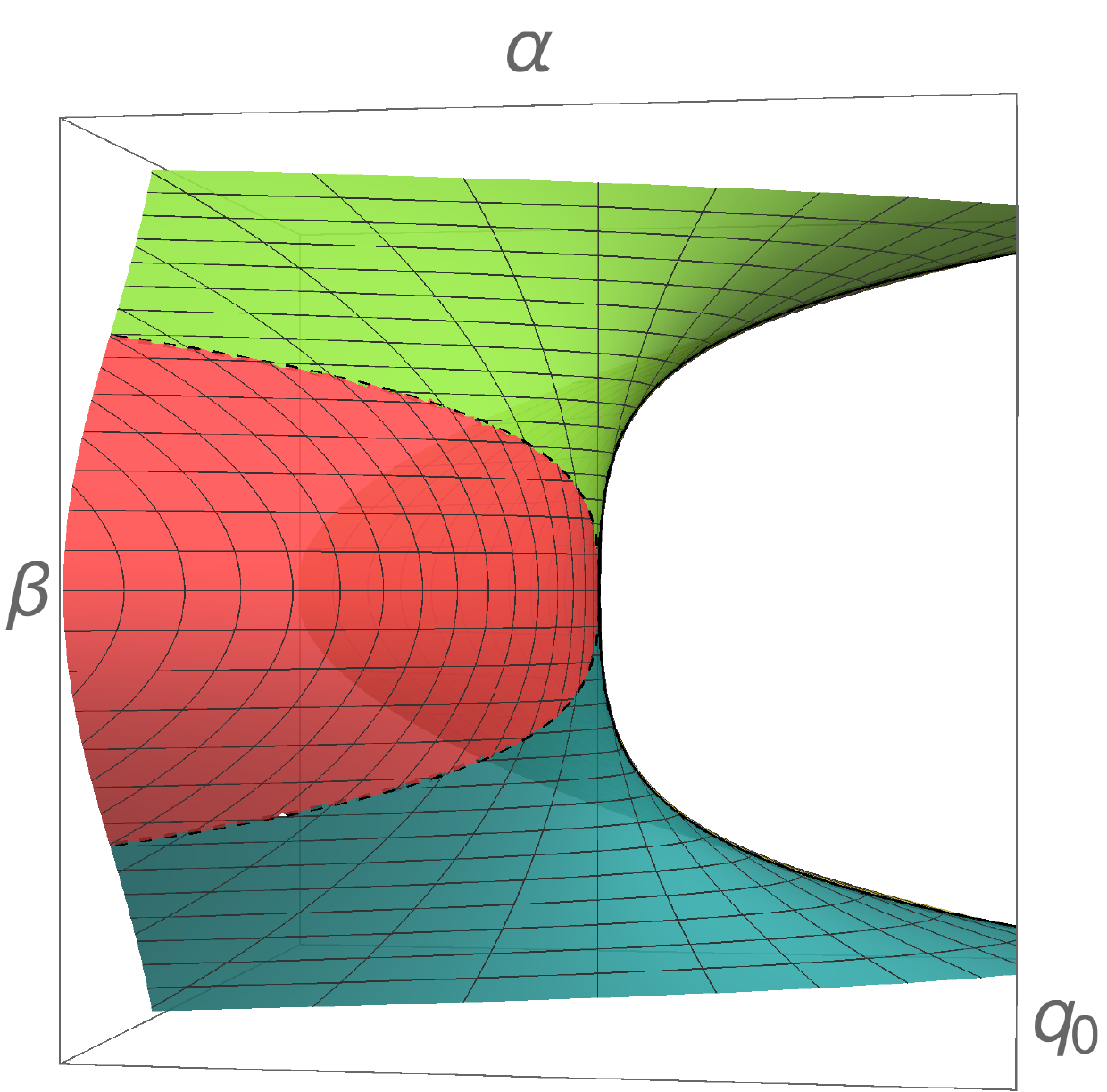}}
  \caption{The surface of equilibria viewed from different angles.}
  \label{fig:syst-example-surf-equil}
\end{figure}

\begin{figure}[p]
\begin{tikzpicture}
 \node at (6,6) {\includegraphics[width=220pt]{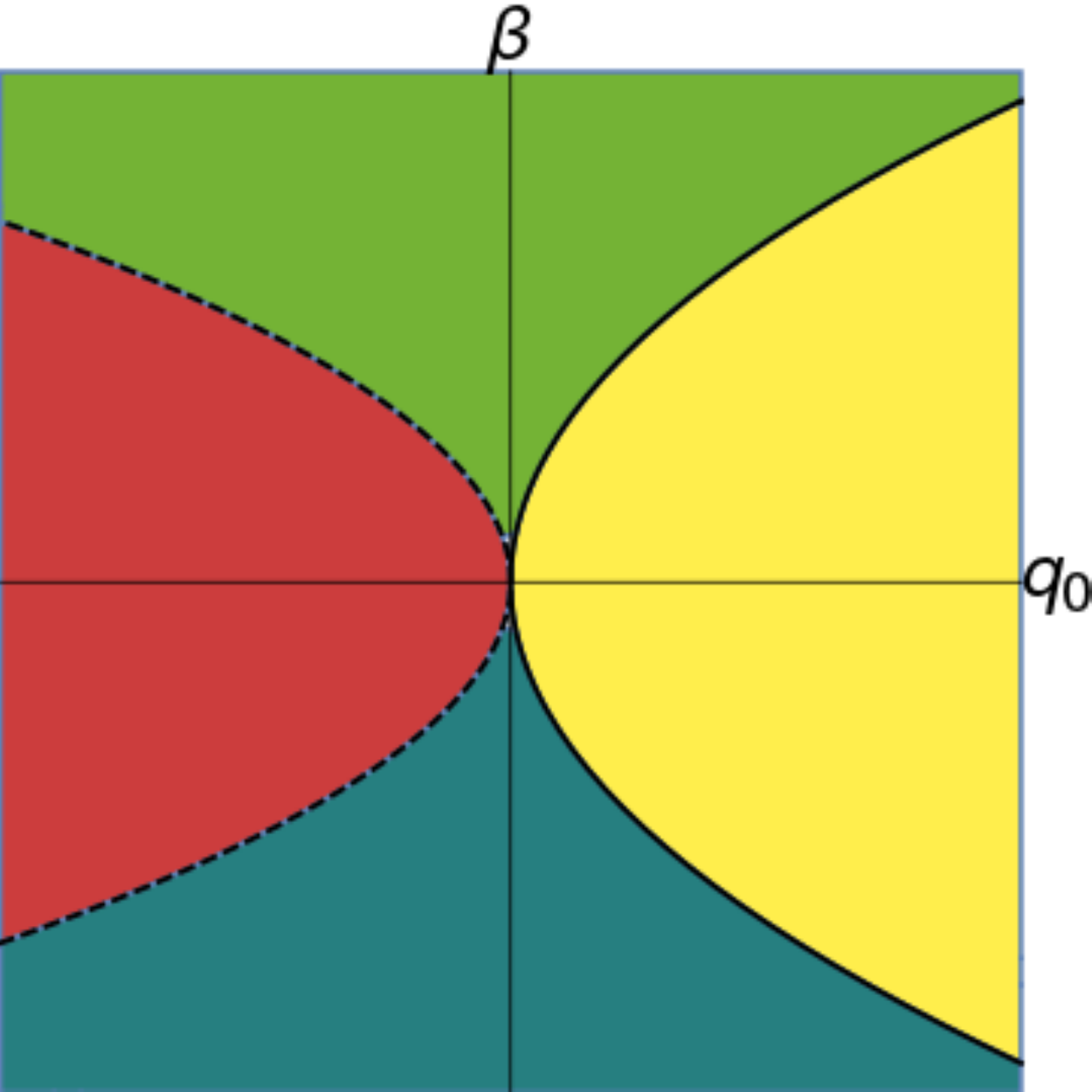}};
 \draw[->] (7.3,10.5) -- (8,8.48);
 \draw[->] (4.5,10.5) -- (5.5,8.48);
 \draw[->] (1.4,10.51) -- (2.7,8.13);
 \draw[->] (1.5,7.5) -- (3.5,6);
 \draw[->] (1.5,4.5) -- (3.18,3.6);
 \draw[->] (1.5,1.4) -- (5.3,3);
 \draw[->] (5.5,1.5) -- (6.7,3.95);
 \draw[->] (8.5,1.5) -- (8.4,5);
 \node at (0,12) {\includegraphics[width=85pt]{pics/dim.pdf}};
 \node at (0,8) {\includegraphics[width=86pt]{pics/cmplx.pdf}};
 \node at (0,4) {\includegraphics[width=86pt]{pics/dre.pdf}};
 \node at (0,0) {\includegraphics[width=86pt]{pics/re-re.pdf}};
 \node at (4,0) {\includegraphics[width=86pt]{pics/re-null.pdf}};
 \node at (8,0) {\includegraphics[width=86pt]{pics/re-im.pdf}};
 \node at (4,12) {\includegraphics[width=86pt]{pics/im-im.pdf}};
 \node at (8,12) {\includegraphics[width=86pt]{pics/im-null.pdf}};
\end{tikzpicture}
\caption{The eigenvalue configurations on the plane $(\beta,q_0)$.}
\label{fig:syst-example-natural}
\end{figure}

\begin{figure}[p]
\begin{tikzpicture}
 \node at (6,6) {\includegraphics[width=220pt]{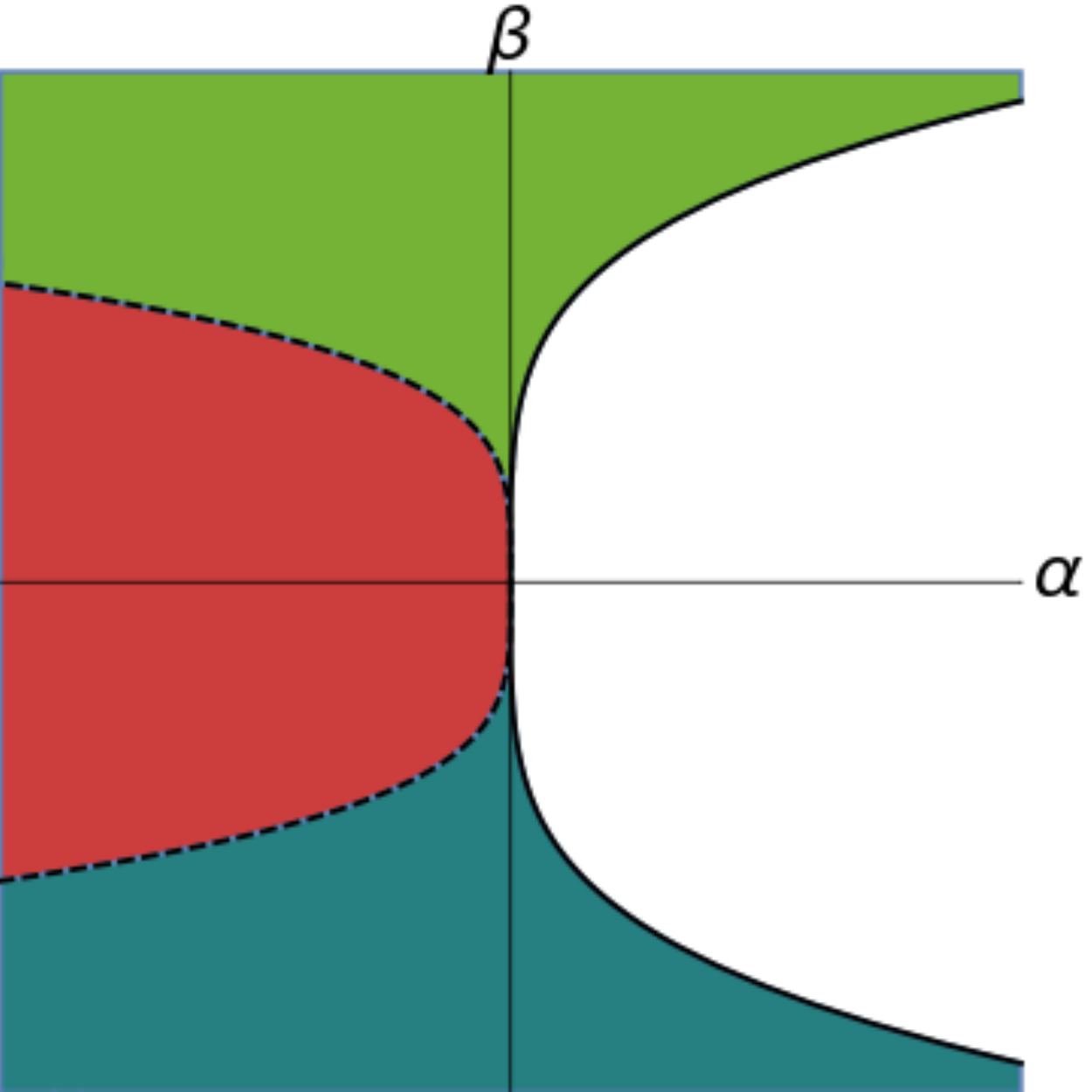}};
 \draw[->] (7.3,10.5) -- (8,8.8);
 \draw[->] (4.5,10.5) -- (5.5,8.48);
 \draw[->] (1.4,10.51) -- (2.7,7.8);
 \draw[->] (1.5,7.5) -- (2.5,7);
 \draw[->] (1.5,4.5) -- (2.5,3.7);
 \draw[->] (1.5,1.4) -- (5.3,3.5);
 \draw[->] (5.5,1.5) -- (6.8,3.2);
 \draw[->] (8.5,1.5) -- (8.4,5);
 \node at (0,12) {\includegraphics[width=85pt]{pics/dim2.pdf}};
 \node at (0,8) {\includegraphics[width=86pt]{pics/cmplx2.pdf}};
 \node at (0,4) {\includegraphics[width=86pt]{pics/dre2.pdf}};
 \node at (0,0) {\includegraphics[width=86pt]{pics/re-re2.pdf}};
 \node at (4,0) {\includegraphics[width=86pt]{pics/re-null2.pdf}};
 \node at (8,0) {\includegraphics[width=86pt]{pics/noeq.pdf}};
 \node at (4,12) {\includegraphics[width=86pt]{pics/im-im2.pdf}};
 \node at (8,12) {\includegraphics[width=86pt]{pics/im-null2.pdf}};
 \node at (8,0) {No equilibria};
\end{tikzpicture}
\caption{The eigenvalue configurations on the plane $(\alpha,\beta)$.}
\label{fig:syst-example-real}
\end{figure}

We know by the previous analysis that the Hamiltonian-Hopf bifurcation and the centre-saddle bifurcation always happen in this system.
Using the same techniques as before we find that the Hamiltonian-Hopf bifurcation happens on a line on the surface of equilibria, defined by $q_0=-\beta^2$, which is the Hopf ``surface''.
The fold ``surface'' can be found to be the line defined by $q_0=\frac{9}{16}\beta^2$.

Since in this case the surface of equilibria is two dimensional, it can be drawn without the need of projections.
This is done in Figures \ref{fig:syst-example-surf-equil}.
The different colours on the surface correspond to different eigenvalue configurations of the system.
The solid line is the fold ``surface'' and the dashed line is the Hopf ``surface''.

Figure \ref{fig:syst-example-surf-equil-3} corresponds to the projection of the surface on the $(\beta,q_0)$ plane.
The eigenvalue configurations of the system in this parametrization on this plane is shown is Figure \ref{fig:syst-example-natural}.
Notice that this is basically identical to the eigenvalue configurations of the unfolding of the linear system, shown in Figure \ref{fig:lin-eig-conf}.

In order to find the eigenvalue configurations of the system in the original parameters, we need to project the surface of equilibria on the plane $(\alpha,\beta)$.
Figure \ref{fig:syst-example-surf-equil-4} corresponds to this projection.
The eigenvalue configurations of the system is shown in Figure \ref{fig:syst-example-real}.
On this projection the fold ``surface'' is given by the equation $512 \alpha=81 \beta^4$ and the Hopf ``surface'' by the equation $16\alpha=-17 \beta^4$.
It should be noted that in \cite{hanssmann06} the two ``surfaces'' are wrongly depicted to have a 1st order tangency instead of 3rd.

\subsection{A more general view on reduction}

\begin{figure}[t]
  \centering
  \subfloat[$r<0$]{\label{fig:reduction-potraits-1}\includegraphics[width=0.3\textwidth]{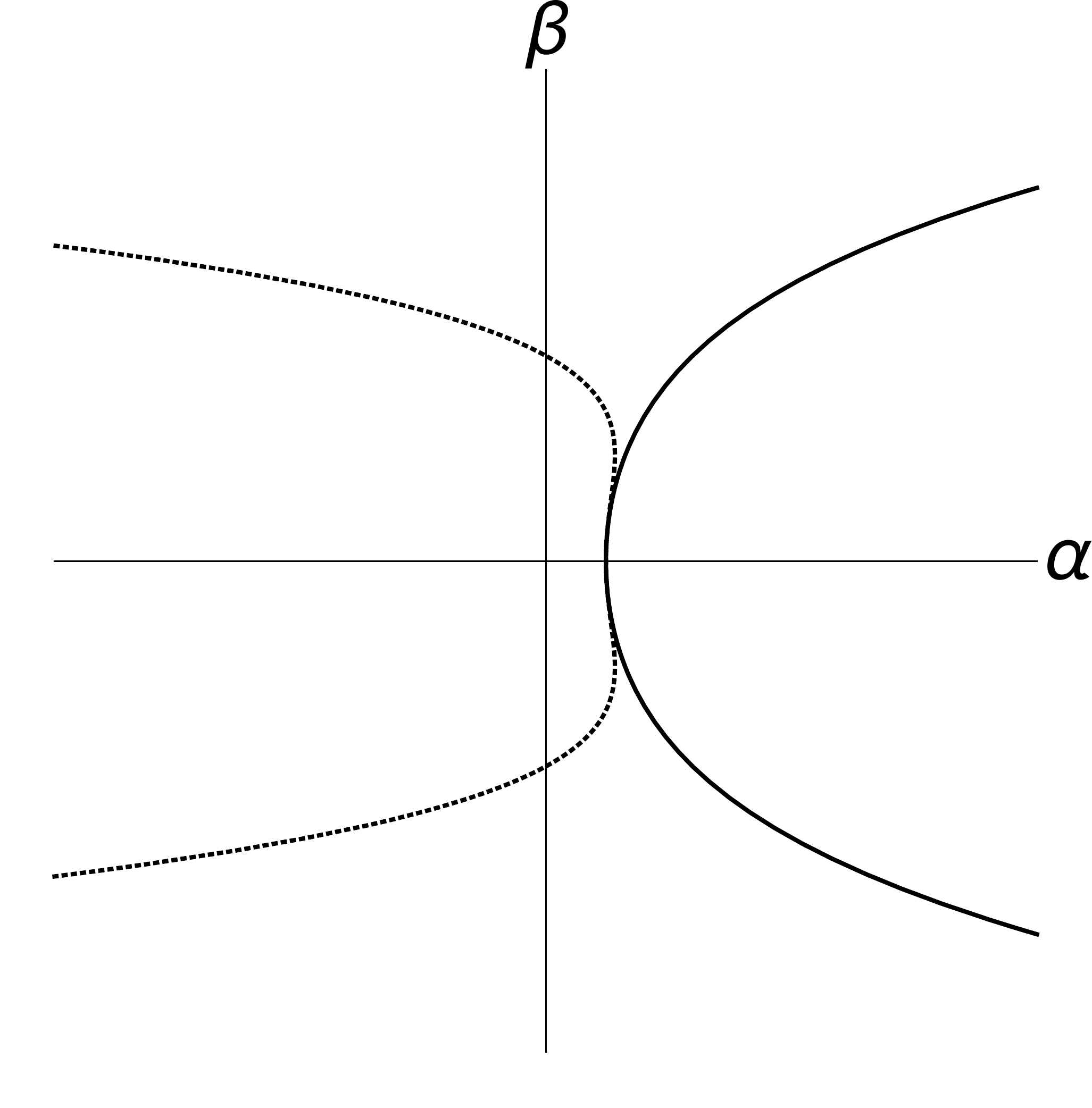}} \hspace{0.02\textwidth}
  \subfloat[$r=0$]{\label{fig:reduction-potraits-2}\includegraphics[width=0.3\textwidth]{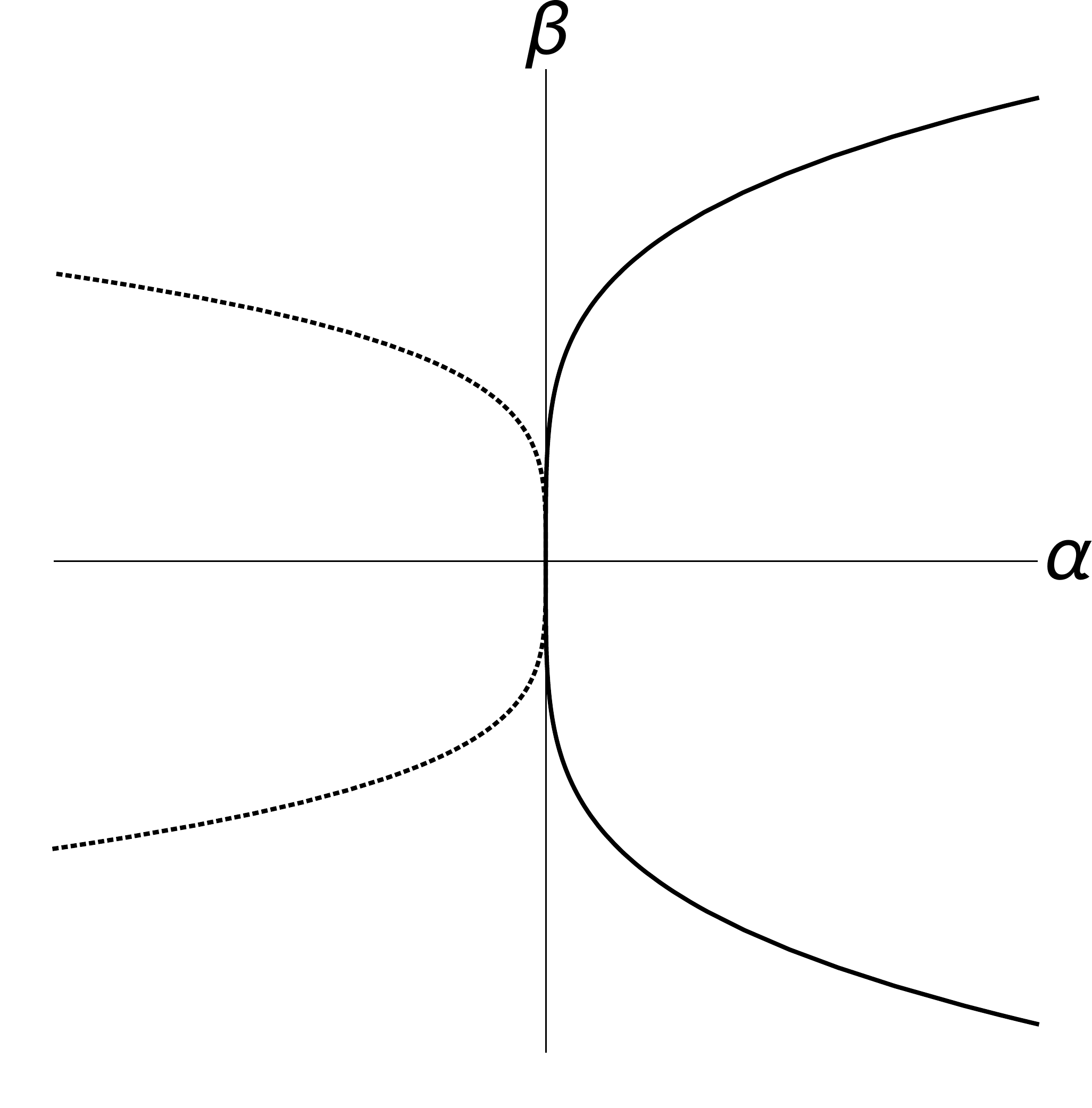}} \hspace{0.02\textwidth}
  \subfloat[$r>0$]{\label{fig:reduction-potraits-4}\includegraphics[width=0.3\textwidth]{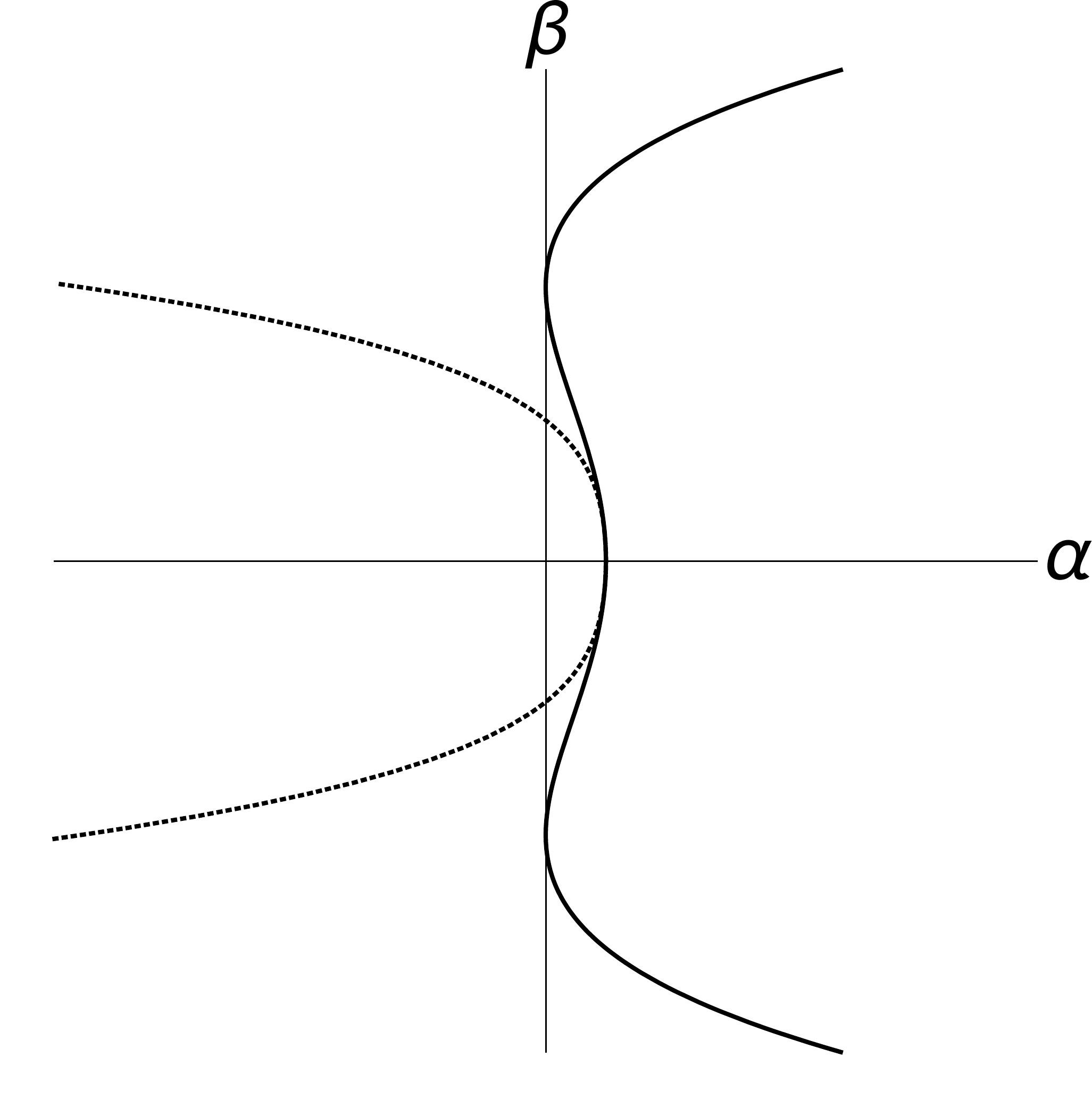}}
  \caption{Eigenvalue configurations for different values of $r$.}
  \label{fig:reduction-portraits}
\end{figure}

The fact that the two ``surfaces'' are defined by fourth degree equations is non-generic.
This non-genericity comes from the fact that the reduction that was performed in section \ref{ch:natural-reduction} was also non-generic.

Let us consider again the Hamiltonian
\begin{equation*}
 H = \frac{p_2^2}{2}-p_1 q_2 +\kappa q_1 +\mu \frac{ q_1^2}{2}+\nu\left( \frac{q_2^2}{2}+\frac{3}{4}p_2 q_1 \right)+a_1 \frac{ q_1^3}{6}.
\end{equation*}
Fix an $r\in\mathbb{R}$ small enough. Then the translation 
\begin{equation*}
q_1 \mapsto q_1+ \frac{r-\mu}{a_1},\;p_2 \mapsto p_2 +\frac{3 \nu( \mu-r)}{4 a_1} 
\end{equation*}
transforms the above Hamiltonian to
\begin{equation*}
 H_r = \frac{p_2^2}{2}-p_1 q_2 +\alpha q_1 +r \frac{ q_1^2}{2}+\beta\left( \frac{q_2^2}{2}+\frac{3}{4}p_2 q_1 \right)+a_1 \frac{ q_1^3}{6},
\end{equation*}
with $\beta=\nu$, $\alpha=\frac{r^2}{2 a_1}+\kappa-\frac{\mu^2}{2 a_1}-\frac{9 \nu^2 r}{16 a_1}+\frac{9 \mu \nu^2}{16 a_1}$. 
As above we can scale $a_1$ to 1.

An equilibrium of the system is of the form $(q_0,0,0,-\frac{3}{4} \beta q_0)$, but now $q_0$ satisfies
\begin{equation*}
\frac{1}{2} q_0^2-\frac{9}{16} \beta^2 q_0+r q_0+\alpha=0
\end{equation*}
and its eigenvalues are $\pm\sqrt{-\frac{5}{4} \beta\pm\sqrt{r+\beta^2+q_0}}.$

Now it is clear that $r$ can be set to any (small) number and all these seemingly different systems are equivalent. The reduction in section \ref{ch:natural-reduction} is the obvious one, hence the name natural, but it is only one from a continuum of possibilities.

In this system the fold ``surface'' is given by the equation $q_0=-r +\frac{9}{16}\beta^2$ and the Hopf ``surface'' by the equation $q_0=-r-\beta^2$.
Since $\alpha=\frac{9}{16} \beta^2-\frac{1}{2} q_0^2-r q_0$, we find
$$\alpha_f(\beta)=\frac{1}{2} r^2-\frac{9}{16} r \beta^2-\frac{17}{16} \beta^4 \;\;\mbox{ and }\;\; \alpha_h(\beta)=\frac{1}{2} r^2-\frac{9}{16} r \beta^2+\frac{81}{512} \beta^4.$$

Since the functions $\alpha_f(\beta)$ and $\alpha_h(\beta)$ are even and share the same constant and second order terms, they are tangent to degree 3 for every $r$.
The reduction in section \ref{ch:natural-reduction} is special in the sense that it makes this second order term disappear.
The eigenvalue configurations of the system for various values of $r$ is shown in Figures \ref{fig:reduction-portraits}.
Notice that by choosing different values for $r$ one takes different cuts of constant $\mu$ of the surfaces in Figure \ref{fig:hopf-fold-real}.

\section*{Acknowledgement}
The author thanks Heinz Han{\ss}mann for introducing the problem and for many fruitful discussions.

\bibliographystyle{alpha}
\bibliography{NilpotEqpaper}

\end{document}